\newtheorem{theorem}{Theorem}
\newtheorem{prop}{Property}
\newtheorem{rem}{Remark}
\newtheorem{dfn}{Definition}
\def\email#1{{\tt #1}}
\title{The simplified topological $\varepsilon$--algorithms for accelerating sequences in a vector space}
\author{Claude Brezinski\thanks{Laboratoire Paul Painlev\'e, UMR CNRS 8524, UFR
de Math\'ematiques, Universit\'e des Sciences
et Technologies de Lille,
59655--Villeneuve d'Ascq cedex,
France (\email{Claude.Brezinski@univ-lille1.fr})}
\and Michela Redivo--Zaglia\thanks{Universit\`a degli Studi di Padova,
Dipartimento di Matematica,
Via Trieste 63, 35121--Padova,
Italy (\email{Michela.RedivoZaglia@unipd.it})}
}
\begin{document}
\maketitle

\begin{abstract}
When a sequence of numbers is slowly converging, it can be transformed into a new sequence which, under some assumptions, could converge faster to the same limit. One of the most well--known sequence transformation is Shanks transformation which can be recursively implemented by the $\varepsilon$--algorithm of Wynn. This transformation and this algorithm have been extended (in two different ways) to sequence of elements of a topological vector space $E$. In this paper, we  present new algorithms of implementing these topological Shanks transformations. They no longer require the manipulation of elements of the algebraic dual space $E^*$ of $E$, nor using the duality product into the rules of the algorithms, they need the storage of less elements of $E$, and the stability is improved. They also allow us to prove convergence and acceleration results for some types of sequences.
Various applications involving sequences of vectors or matrices show the interest of the new algorithms.
\end{abstract}

\begin{keywords}
Convergence acceleration, extrapolation, matrix equations, matrix functions, iterative methods.
\end{keywords}

\begin{AMS}
65B05, 65F10, 65F30, 65F60, 65J10, 65J15, 65Q20.
\end{AMS}

\pagestyle{myheadings}
\thispagestyle{plain}
\markboth{\sc The simplified topological $\varepsilon$--algorithms}{\sc C. Brezinski and M. Redivo--Zaglia}

\section{Presentation}

Let $(\mathbf S_n)$ be a sequence of elements of a vector space $E$ on a field $\mathbb K$ ($\mathbb R$ or $\mathbb C$). If the sequence $(\mathbf S_n)$ is slowly converging to its limit $\mathbf S$ when $n$ tends to infinity, it can be transformed, by a {\it sequence transformation}, into a new sequence converging, under some assumptions, faster to the same limit. The construction of most sequence transformations starts from the notion of {\it kernel} which is the set of sequences which are transformed into a constant sequence all members of which are equal to $\mathbf S$
(see \cite{cbmrz}).

\vskip 2mm

In Section \ref{shat}, the scalar Shanks transformation for transforming a sequence of numbers \cite{shanks},
and its implementation by the scalar $\varepsilon$--algorithm \cite{wynn} are remembered. The two versions
of the
{\it topological Shanks transformation} proposed by Brezinski \cite{etopo} for sequences of elements of
a vector space are reminded in Section \ref{tsst}, as well as their original implementation by two kinds
of a {\it topological} $\varepsilon${\it--algorithm} in Section \ref{tea}. They both need to perform operations
involving elements of the algebraic dual space $E^*$ of $E$.

The topological Shanks transformations and the topological $\varepsilon$--algorithms received much
attention in the literature, so that all contributions cannot be quoted here. They have many applications
in the solution of systems of linear and nonlinear equations, in the computation of eigenelements,
in Pad\'{e}--type approximation, in matrix functions and matrix equations,
in Lanczos method, etc. See \cite{birk,jrs,sal1,cbmrz,jbi,jsnuma,anm,jmt,smith2,smith22,tan} and their references,
in particular, for the older ones.

Two new algorithms for implementing these topological Shanks transformations, the first and the second
{\it simplified topological} $\varepsilon${\it --algorithm}, are introduced in Section \ref{tsea}.
They have several important advantages: elements of the dual vector space $E^*$ have no longer to be
used in their recursive rules and are replaced by quantities computed by the scalar $\varepsilon$--algorithm,
less vectors have to be stored than for implementing the transformations by the topological
$\varepsilon$--algorithms, a very important issue in applications, and the numerical stability of
the algorithms is improved. Moreover, these new algorithms allow us to prove convergence
and acceleration properties of the transformations that could hardly have been obtained from the
original topological $\varepsilon$--algorithms (Section \ref{conacc}).
The implementation of these algorithms is discussed in Section \ref{impl}. Some applications involving
sequences of vectors and matrices are presented in Section \ref{appli}.

\vskip 2mm

In this paper, characters in bold correspond to elements of a general vector space $E$ or of its
algebraic dual $E^*$, while regular ones designate real or complex numbers.

\section{Shanks transformation and the $\varepsilon$--algorithm}
\label{shat}

For sequences $(S_n)$ of real or complex numbers, an important sequence transformation is Shanks
transformation \cite{shanks} whose kernel consists of sequences satisfying the homogeneous linear
difference equation of order $k$
\begin{equation}
a_0(S_n-S)+\cdots+a_k(S_{n+k}-S)=0,\quad n=0,1,\ldots, \label{aa1}
\end{equation}
where the $a_i$'s are arbitrary constants independent of $n$ such that $a_0a_k \neq 0$, and
$a_0+\cdots+a_k \neq 0$, and where
the unknown $S$ is the limit of $(S_n)$ if it converges, or is called its antilimit otherwise.
The exact expression of such sequences is given in \cite{cbmc}.
Assuming that (\ref{aa1}) holds for all $n$, the problem is to compute $S$.
We have, for all $n$,
\begin{equation}
a_0 \Delta S_n+\cdots+a_k \Delta S_{n+k}=0, \label{rr1}
\end{equation}
where the forward difference operator $\Delta$ is defined by $\Delta^{i+1}
S_n=\Delta^i S_{n+1}-\Delta^i S_n$, with
$\Delta^0 S_n=S_n$.
Writing (\ref{rr1}) for the indexes $n,\ldots,n+k-1$ produces $k$ homogeneous linear equations
in the $k+1$ unknowns $a_0,\ldots,a_k$. Adding the condition $a_0+\cdots+a_k=1$ (which does not
restrict the generality), and solving the system obtained, gives the unknowns, and then, from
(\ref{aa1}), we obtain, for all $n$,
$$S=a_0S_n+\cdots+a_kS_{n+k}.$$

Now, if $(S_n)$ does not satisfy (\ref{aa1}), we can still write the system giving the unknown
coefficients and compute the linear combination $a_0S_n+\cdots+a_kS_{n+k}$. However, these
coefficients and the linear combination will now depend on $n$ and $k$, and they will be
respectively denoted $a_i^{(n,k)}$ and $e_k(S_n)$, and we have
$$e_k(S_n)=a_0^{(n,k)}S_n+\cdots+a_k^{(n,k)}S_{n+k}, \quad k,n=0,1,\ldots$$
where the $a_i^{(n,k)}$ are solution of the same system as before, that is
\begin{equation}
\label{scTEA}
\left\{
\begin{array}{ccccccc}
a_0^{(n,k)} &+& \cdots &+& a_k^{(n,k)} &=& 1\\
a_0^{(n,k)} \Delta S_n&+&\cdots &+& a_k^{(n,k)} \Delta S_{n+k}&=& 0\\
\vdots &&&& \vdots &&\\
a_0^{(n,k)} \Delta S_{n+k-1}&+&\cdots &+& a_k^{(n,k)} \Delta S_{n+2k-1}&=& 0.
\end{array}
\right.
\end{equation}

Thus, the sequence $(S_n)$ has been transformed into the set of sequences $\{(e_k(S_n))\}$.
The transformation $(S_n) \longmapsto (e_k(S_n))$ is called {\it Shanks transformation}. It is a generalization of
the well--known Aitken's $\Delta^2$ process which is recovered for $k=1$.

Cramers' rule allows us to write $e_k(S_n)$ as the following ratio of determinants
\begin{equation*}
e_k(S_n)=\frac{\left|
\begin{array}{ccc}
S_n & \cdots & S_{n+k}\\
\Delta S_n & \cdots & \Delta S_{n+k}\\
\vdots && \vdots\\
\Delta S_{n+k-1} & \cdots & \Delta S_{n+2k-1}
\end{array}
\right|}
{\left|
\begin{array}{ccc}
1 & \cdots & 1\\
\Delta S_n & \cdots & \Delta S_{n+k}\\
\vdots && \vdots\\
\Delta S_{n+k-1} & \cdots & \Delta S_{n+2k-1}
\end{array}
\right|}, \quad k,n=0,1,\ldots
\end{equation*}

By construction, we have the

\begin{theorem} {\em \cite{cbmrz}}
\label{cns}
For all $n$, $e_k(S_n)=S$ if and only if $\exists a_0,\ldots,a_k$, with $a_0a_k \neq 0$ and $a_0+\cdots+a_k \neq 0$, such that, for all $n$,
$$a_0(S_n-S)+\cdots+a_k(S_{n+k}-S)=0,$$
that is, in other words, if and only if $(S_n)$ belongs to the kernel of Shanks transformation $(S_n) \longmapsto (e_k(S_n))$ for $k$ fixed.
\end{theorem}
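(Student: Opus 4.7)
The theorem is an if and only if statement about the kernel of Shanks transformation, and I would prove each direction separately, using the normalized linear system (\ref{scTEA}) together with a Hankel-determinant characterization of sequences with rational generating function.

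For the ``if'' direction, given $\sum_{i=0}^k a_i (S_{n+i} - S) = 0$ for all $n$ with $a_0+\cdots+a_k \neq 0$, I would normalize so that $\sum_i a_i = 1$ and apply the forward difference $\Delta$ to obtain $\sum_i a_i \Delta S_{n+i} = 0$ for every $n$. Thus $(a_0, \ldots, a_k)$ solves the system (\ref{scTEA}) at every index, and by uniqueness (implicit in the non-vanishing of the Cramer denominator, which is what makes $e_k(S_n)$ well-defined) $a_i^{(n,k)} = a_i$. Hence $e_k(S_n) = \sum_i a_i S_{n+i} = \sum_i a_i (S_{n+i}-S) + S \sum_i a_i = 0 + S = S$.

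For the ``only if'' direction, I would rewrite $e_k(S_n) = S$ using the Cramer formula: subtracting $S$ times the denominator from the numerator and performing elementary row operations on the $\Delta S$ block converts it into the vanishing of the Hankel determinant $H_{k+1}(u_n) := \det(u_{n+i+j})_{i,j=0}^{k}$, with $u_n := S_n - S$. Under the hypothesis, $H_{k+1}(u_n) = 0$ for every $n \geq 0$, so the infinite Hankel matrix $(u_{i+j})_{i,j \geq 0}$ has rank at most $k$. Kronecker's theorem then supplies nonzero constants $(a_0, \ldots, a_k)$ with $\sum_i a_i u_{n+i} = 0$ for all $n$, which is the required recurrence. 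The boundary conditions $a_0 a_k \neq 0$ and $a_0+\cdots+a_k \neq 0$ are enforced by taking the recurrence of exact order $k$ (non-degeneracy) and ensuring that $1$ is not a root of its characteristic polynomial, so that $S$ is uniquely determined as the limit or antilimit.

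The ``only if'' is the delicate direction. A naive attempt to set $a_i := a_i^{(0,k)}$ and propagate the recurrence to successive indices using only the systems (\ref{scTEA}) runs into an essential circularity: verifying that $(a_i)$ solves the $(n+1)$-system requires exactly the one missing equation equivalent to the identity being proven. Breaking this deadlock requires a global structural ingredient, and Kronecker's characterization of finite-rank Hankel matrices provides precisely the needed input, namely a single coefficient vector annihilating all shifted Hankel matrices simultaneously.
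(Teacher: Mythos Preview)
The paper does not supply a proof of this theorem: it is stated with a citation to \cite{cbmrz} and introduced only by the phrase ``By construction, we have the,'' indicating that the sufficiency direction is regarded as immediate from the derivation preceding the statement while the full equivalence is deferred to the cited reference. There is therefore no detailed argument in the present paper against which to compare your attempt.

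Your proposal is essentially correct and is a standard way to argue the result. The ``if'' direction matches the paper's ``by construction'' remark. For the ``only if'' direction, your reduction of $e_k(S_n)=S$ to the vanishing of the Hankel determinant $H_{k+1}(S_n-S)$ via row operations is right, and invoking Kronecker's finite-rank Hankel characterization is a legitimate route to a single $n$-independent recurrence. Where you are a little loose is in justifying the side conditions $a_0a_k\neq 0$ and $\sum_i a_i\neq 0$. Your phrasing (``so that $S$ is uniquely determined as the limit or antilimit'') is off target, since $S$ is given here, not being solved for. The clean argument is that both failures force the Cramer denominator $D_n$ to vanish, contradicting the implicit well-definedness of $e_k(S_n)$ for all $n$: if $\sum_i a_i=0$, the coefficient vector lies in the kernel of the full $(k{+}1)\times(k{+}1)$ matrix of system \eqref{scTEA}, so $D_n=0$; and if the minimal recurrence has order $r<k$, then two linearly independent shifted copies of the recurrence vector annihilate the $\Delta S$ block, again forcing $D_n=0$. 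With that sharpening, your argument is complete.
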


\vskip 2mm

Shanks transformation can be recursively implemented by the
scalar $\varepsilon$--algorithm of Wynn \cite{wynn}, whose rules are
\begin{equation}
\label{esca}
\left\{
\begin{array}{lcl}
\varepsilon_{-1}^{(n)}&=&0,\qquad n=0,1,\ldots,\\
\varepsilon_{0}^{(n)}&=&S_n,\qquad n=0,1,\ldots,\\
\varepsilon_{k+1}^{(n)}&=&\varepsilon_{k-1}^{(n+1)}+
(\varepsilon_{k}^{(n+1)}-\varepsilon_{k}^{(n)})^{-1}, \qquad k,n=0,1,\ldots,
\end{array}
\right.
\end{equation}
and it holds
\begin{prop}
\label{esc}
For all $k$ and $n$, $\varepsilon_{2k}^{(n)}=e_k(S_n)$ and $\varepsilon_{2k+1}^{(n)}=1/e_k(\Delta S_n)$.
\end{prop}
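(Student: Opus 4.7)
The plan is to proceed by induction on $k$. I would define $\tilde{\varepsilon}_{2k}^{(n)} := e_k(S_n)$ and $\tilde{\varepsilon}_{2k+1}^{(n)} := 1/e_k(\Delta S_n)$, with the convention $\tilde{\varepsilon}_{-1}^{(n)} := 0$, and show that this doubly--indexed family satisfies both the initial conditions and the recursion of (\ref{esca}). Since the rules of (\ref{esca}) determine $\varepsilon_k^{(n)}$ uniquely from $\varepsilon_{-1}^{(n)}$ and $\varepsilon_0^{(n)}$, the identification $\varepsilon_k^{(n)} = \tilde{\varepsilon}_k^{(n)}$ then follows.

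The initialization is immediate: $e_0(S_n)=S_n$ from (\ref{scTEA}) with $k=0$, so $\tilde{\varepsilon}_0^{(n)} = \varepsilon_0^{(n)}$; applying the recursion of (\ref{esca}) once gives $\varepsilon_1^{(n)} = 1/\Delta S_n = 1/e_0(\Delta S_n) = \tilde{\varepsilon}_1^{(n)}$. For the inductive step, I would rewrite the recursion in (\ref{esca}) in multiplicative ``rhombus'' form,
$$(\tilde{\varepsilon}_{k+1}^{(n)} - \tilde{\varepsilon}_{k-1}^{(n+1)})(\tilde{\varepsilon}_k^{(n+1)} - \tilde{\varepsilon}_k^{(n)}) = 1,$$
and verify it separately in the two parities of $k$. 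Substituting the definitions of $\tilde{\varepsilon}$, these become polynomial identities among the Hankel--type determinants that appear in the Cramer formula for $e_k(S_n)$ given above the statement, together with its $\Delta S$--analogue obtained by replacing the first row by $\Delta S_n,\ldots,\Delta S_{n+k}$.

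The natural tool is Sylvester's determinantal identity applied to a suitable $(k+2)\times(k+2)$ Hankel matrix built from the increments $\Delta^j S_{n+i}$: it expresses the difference of two products of $(k+1)\times(k+1)$ corner cofactors as the product of the full determinant with a lower--order one, and after division by a common denominator this yields exactly the rhombus relation. The main obstacle is not analytical but combinatorial: one must match each cofactor produced by Sylvester with the correct numerator or denominator of $e_k(S_n)$, $e_{k-1}(\Delta S_{n+1})$, or $e_k(\Delta S_n)$, and handle the two parities of $k$ separately, since the roles of the ``$S$'' determinant and the ``$\Delta S$'' determinant are not symmetric in the two cases. Once this bookkeeping is done, both rhombus identities drop out and the induction closes.
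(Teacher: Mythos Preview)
The paper does not actually prove this property: it is stated as a known result due to Wynn \cite{wynn} and simply quoted, so there is no ``paper's own proof'' to compare against. Your outline via Sylvester's determinantal identity applied to the Hankel determinants underlying $e_k(S_n)$ and $e_k(\Delta S_n)$ is precisely the classical route (essentially Wynn's original argument, reproduced in many later references such as \cite{cbmrz}), and the structure you describe --- verify the rhombus relation in both parities by matching Sylvester cofactors to the numerator/denominator determinants --- is correct. The only caution is that your proposal remains a sketch at the key step: the identification of each Sylvester minor with the right Hankel determinant requires writing out the matrices explicitly and tracking row/column deletions carefully, and this is where errors typically creep in; but as a plan it is sound and standard.
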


These elements are usually displayed in a two dimensional array, the so--called $\varepsilon$--array
(see Table \ref{tabeps} for a triangular part of it), where the quantities with an odd lower index are
only intermediate computations. The rule (\ref{esca}) relates numbers located at the four vertices of
a rhombus in Table \ref{tabeps}.

\vskip 2mm

\begin{table}[htb]
\[
\begin{array}{cccccc}
\varepsilon_{-1}^{(0)} = 0 & & & & &  \\
& \varepsilon_{0}^{(0)} = S_0 & & & &  \\
\varepsilon_{-1}^{(1)} = 0 & & \varepsilon_{1}^{(0)} & & &  \\
& \varepsilon_{0}^{(1)} = S_1 & & \varepsilon_{2}^{(0)} & &  \\
\varepsilon_{-1}^{(2)} = 0 & & \varepsilon_{1}^{(1)} &
& \varepsilon_{3}^{(0)} &  \\
& \varepsilon_{0}^{(2)} = S_2 & & \varepsilon_{2}^{(1)} & & \varepsilon_{4}^{(0)} \\
\varepsilon_{-1}^{(3)} = 0 & & \varepsilon_{1}^{(2)} & &
\varepsilon_{3}^{(1)} &  \\
& \varepsilon_{0}^{(3)} = S_3 & &
\varepsilon_{2}^{(2)} & &   \\
\varepsilon_{-1}^{(3)} = 0 &  & \varepsilon_{1}^{(3)} & &
 &  \\
 & \varepsilon_{0}^{(4)} = S_4 &  &
 & &   \\
\varepsilon_{-1}^{(4)} = 0 &  &  &  &
&  \\
\end{array}
\]
\caption{A triangular part of the $\varepsilon$--array\label{tabeps}.}
\end{table}

In passing, let us give two relations which seem not to have been noticed earlier.
Their proofs are straightforward by induction.

\begin{prop}
\label{prop1}
For all $k$ and $n$,
$$
\varepsilon_{2k}^{(n)}=S_{n+k}+\sum_{i=1}^k 1/\Delta \varepsilon_{2i-1}^{(n+k-i)}, \qquad
\varepsilon_{2k+1}^{(n)}=\sum_{i=0}^k 1/\Delta \varepsilon_{2i}^{(n+k-i)}.$$
\end{prop}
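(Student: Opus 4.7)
The proof is a straightforward simultaneous induction on $k$, driven by the $\varepsilon$-algorithm rule rewritten as $\varepsilon_{k+1}^{(n)}=\varepsilon_{k-1}^{(n+1)}+1/\Delta\varepsilon_{k}^{(n)}$, where $\Delta$ acts on the superscript. The two identities feed each other and the inductive step for each one amounts to peeling off the top term of its telescoping sum.

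The plan is first to check the base case $k=0$. For the even identity, the empty sum gives $\varepsilon_{0}^{(n)}=S_{n}$, which is the initialization. For the odd identity, the rule gives $\varepsilon_{1}^{(n)}=\varepsilon_{-1}^{(n+1)}+1/\Delta\varepsilon_{0}^{(n)}=1/\Delta\varepsilon_{0}^{(n)}$, which is precisely the single term $i=0$ on the right-hand side.

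Now assume both identities hold at level $k-1$. To prove the even one at level $k$, I would apply the rhombus rule to get $\varepsilon_{2k}^{(n)}=\varepsilon_{2k-2}^{(n+1)}+1/\Delta\varepsilon_{2k-1}^{(n)}$, then substitute the induction hypothesis for $\varepsilon_{2k-2}^{(n+1)}$:
\[
\varepsilon_{2k-2}^{(n+1)}=S_{(n+1)+(k-1)}+\sum_{i=1}^{k-1}1/\Delta\varepsilon_{2i-1}^{((n+1)+(k-1)-i)}=S_{n+k}+\sum_{i=1}^{k-1}1/\Delta\varepsilon_{2i-1}^{(n+k-i)}.
\]
The added term $1/\Delta\varepsilon_{2k-1}^{(n)}$ is exactly the missing $i=k$ contribution $1/\Delta\varepsilon_{2k-1}^{(n+k-k)}$, completing the sum up to $i=k$. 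The odd identity at level $k$ is proved identically: use the rhombus rule $\varepsilon_{2k+1}^{(n)}=\varepsilon_{2k-1}^{(n+1)}+1/\Delta\varepsilon_{2k}^{(n)}$, apply the induction hypothesis to $\varepsilon_{2k-1}^{(n+1)}$ giving a sum from $i=0$ to $i=k-1$ of terms $1/\Delta\varepsilon_{2i}^{(n+k-i)}$, and recognise the extra term as $1/\Delta\varepsilon_{2k}^{(n+k-k)}$, the $i=k$ summand.

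There is no real obstacle: the only care needed is in the shift of indices when substituting the hypothesis evaluated at $n+1$ (so that $n+1+(k-1)-i$ collapses to $n+k-i$) and in observing that the newly added reciprocal is precisely the top-of-range term $i=k$ in each sum. The argument does not require Proposition~\ref{esc} or the determinantal representation; it uses only the recurrence~(\ref{esca}) and the initializations $\varepsilon_{-1}^{(n)}=0$, $\varepsilon_{0}^{(n)}=S_n$.
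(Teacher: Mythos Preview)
Your proof is correct and matches the paper's intended approach: the paper simply states that the proofs are ``straightforward by induction,'' and your argument carries out exactly that induction via the rhombus rule $\varepsilon_{k+1}^{(n)}=\varepsilon_{k-1}^{(n+1)}+1/\Delta\varepsilon_{k}^{(n)}$. One small remark: the two identities do not actually ``feed each other''---the even formula at level $k$ uses only the even formula at level $k-1$, and likewise for the odd one---so the induction is really two independent (but parallel) inductions rather than a genuinely simultaneous one.
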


\section{The topological Shanks transformations}
\label{tsst}

Let us now consider the case of a sequence of elements $\mathbf S_n$ of a vector space $E$.
In 1962, Wynn generalized the scalar $\varepsilon$--algorithm to sequences of vectors $(\mathbf S_n)$
by defining, in the rule of the scalar $\varepsilon$--algorithm,
the inverse of a vector $\mathbf u \in E=\mathbb R^m$ by
$\mathbf u^{-1}=\mathbf u/(\mathbf u,\mathbf u)$, where $(\cdot,\cdot)$ is the
usual inner product \cite{evec}.
He thus obtained the {\it vector $\varepsilon$--algorithm} ({\sc vea})
which had no algebraic foundation. However, it was later proved by McLeod \cite{mcl}
that the kernel of this vector $\varepsilon$--algorithm is the set of vector
sequences satisfying a relation of the same form as (\ref{aa1}) with $a_i \in \mathbb R$.
The proof was quite technical and it involved Clifford algebra. It was later showed that
the vectors $\boldsymbol\varepsilon_{2k}^{(n)} \in \mathbb R^m$ it computes are given by
ratios of determinants of dimension $2k+1$ instead of $k+1$ as in the scalar case \cite{prgm},
or by designants of dimension $k+1$, objects that generalize determinants in a non--commutative
algebra \cite{5sala}.

\vskip 2mm

To remedy the lack of an algebraic theory for the vector $\varepsilon$--algorithm similar to that existing for the scalar one, Brezinski, following the approach of Shanks, obtained two versions of the so--called {\it topological Shanks transformation} \cite{etopo}
which can, more generally, be applied to a sequence of elements of a topological vector space $E$ (thus the name) on $\mathbb K$ ($\mathbb R$ or $\mathbb C$),
since, for being able to deal with convergence results, $E$ has to be a topological vector space.
Starting from a relation of the same form as (\ref{aa1}), that is
\begin{equation}
\label{eqd2}
a_0(\mathbf S_n-\mathbf S)+\cdots+a_k(\mathbf S_{n+k}-\mathbf S)={\mathbf 0} \in E,\quad n=0,1,\ldots,
\end{equation}
where $\mathbf S_n \in E$ and $a_i \in \mathbb K$ with $a_0a_k \neq 0$ and $a_0+\cdots+a_k \neq 0$, we write again
$$a_0 \Delta \mathbf S_n+\cdots+a_k \Delta \mathbf S_{n+k}=\mathbf 0$$
for the indexes $n,\ldots,n+k-1$. However, these relations do not allow to compute the numbers
$a_i$ since the $\mathbf S_n$ are elements of $E$
(in many practical applications they are vectors of $\mathbb R^m$ or square matrices of
dimension $m$). For that purpose, let $\mathbf y \in E^*$ (the algebraic dual space of $E$,
that is the vector space of linear functionals on $E$), and take the duality product of
these relations with it. We obtain
$$a_0 <\mathbf y,\Delta \mathbf S_i>+\cdots+a_k <\mathbf y,\Delta \mathbf S_{i+k}>=\mathbf 0,
\qquad i=n,\ldots,n+k.$$
As in the scalar case, if $(\mathbf S_n)$ does not satisfy (\ref{eqd2}), the preceding
relations, together with the additional condition that the coefficients $a_i$ sum up to 1,
can still be written. We thus obtain a system of $k+1$ equations in $k+1$ unknowns,
denoted $a_i^{(n,k)}$, similar to (\ref{scTEA}),

\begin{equation}
\label{sTEA}
\left\{
\begin{array}{ccccccc}
a_0^{(n,k)} &+& \cdots &+& a_k^{(n,k)} &=& 1\\
a_0^{(n,k)} <\mathbf y,\Delta \mathbf S_n>&+&\cdots &+& a_k^{(n,k)} <\mathbf y,\Delta \mathbf S_{n+k}>&=& 0\\
\vdots &&&& \vdots &&\\
a_0^{(n,k)} <\mathbf y,\Delta \mathbf S_{n+k-1}>&+&\cdots &+& a_k^{(n,k)} <\mathbf y,\Delta \mathbf S_{n+2k-1}>&=& 0,
\end{array}
\right.
\end{equation}
and we define the first topological Shanks transformation by
\begin{equation*}
\mathbf {\widehat{e}}_k(\mathbf S_n)=a_0^{(n,k)}\mathbf S_n+\cdots+a_k^{(n,k)}\mathbf S_{n+k}, \quad n,k=0,1,\ldots
\end{equation*}

Thus, as in the scalar case,  we have
$$\mathbf {\widehat{e}}_k(\mathbf S_n)=\frac{\left|
\begin{array}{ccc}
\mathbf S_n & \cdots & \mathbf S_{n+k}\\
<\mathbf y,\Delta \mathbf S_n> & \cdots & <\mathbf y,\Delta \mathbf S_{n+k}>\\
\vdots && \vdots\\
<\mathbf y,\Delta \mathbf S_{n+k-1}> & \cdots & <\mathbf y,\Delta \mathbf S_{n+2k-1}>
\end{array}
\right|}
{\left|
\begin{array}{ccc}
1 & \cdots & 1\\
<\mathbf y,\Delta \mathbf S_n> & \cdots & <\mathbf y,\Delta \mathbf S_{n+k}>\\
\vdots && \vdots\\
<\mathbf y,\Delta \mathbf S_{n+k-1}> & \cdots & <\mathbf y,\Delta \mathbf S_{n+2k-1}>
\end{array}
\right|}, \quad k,n=0,1,\ldots,
$$
where the determinant in the numerator denotes the element of $E$ obtained by developing it with respect to its first row by the classical rule for expanding a determinant.

\vskip 2mm

Similarly, the second topological Shanks transformation is defined by
$$
\mathbf{ {\widetilde e}}_k(\mathbf S_n)=a_0^{(n,k)}\mathbf S_{n+k}+\cdots+a_k^{(n,k)}\mathbf S_{n+2k}, \quad n,k=0,1,\ldots,
$$
where the $a_i^{(n,k)}$'s are again solution of the system (\ref{sTEA}), and, thus, are the same as the coefficients in the first topological Shanks transformation. These $\mathbf{ {\widetilde e}}_k(\mathbf S_n)$ are given by the same ratio of determinants as above after replacing the first row in the numerator by $\mathbf S_{n+k},\ldots,\mathbf S_{n+2k}$.

\vskip 2mm

The following fundamental property shows the connection between the scalar and the topological Shanks transformations

\begin{prop}
\label{prp}
Setting $S_n=<\mathbf y, \mathbf S_n>$, the systems (\ref{scTEA}) and (\ref{sTEA}) giving the coefficients $a_i^{(n,k)}$ of the scalar, the first and the second topological Shanks transformations are the same.
\end{prop}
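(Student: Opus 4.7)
The plan is to argue directly from the linearity of the duality product and of the forward difference operator. The only fact I need is that $<\mathbf{y},\cdot>$ is a linear functional on $E$, so it commutes with every finite linear combination of elements of $E$ and, in particular, with $\Delta$.

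First I would set $S_n = <\mathbf{y},\mathbf{S}_n>$ and observe that, by linearity,
$$<\mathbf{y},\Delta\mathbf{S}_n> \;=\; <\mathbf{y},\mathbf{S}_{n+1}> - <\mathbf{y},\mathbf{S}_n> \;=\; S_{n+1}-S_n \;=\; \Delta S_n$$
for every $n$, and likewise $<\mathbf{y},\Delta\mathbf{S}_{n+j}> = \Delta S_{n+j}$ for every $j \ge 0$.

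Next, I would substitute these identities row by row into system (\ref{sTEA}). The top equation $a_0^{(n,k)}+\cdots+a_k^{(n,k)}=1$ is literally common to the two systems, while each of the remaining $k$ rows of (\ref{sTEA}) becomes exactly the corresponding row of (\ref{scTEA}). Hence the two $(k+1)\times(k+1)$ linear systems have identical coefficient matrices and identical right-hand sides, and therefore the same solution $(a_0^{(n,k)},\ldots,a_k^{(n,k)})$.

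There is no real obstacle in this argument: the content of the property is precisely that, once the linear form $<\mathbf{y},\cdot>$ has been applied to reduce the vectors $\Delta\mathbf{S}_{n+j} \in E$ to the scalars $\Delta S_{n+j} \in \mathbb{K}$, the topological Shanks construction collapses, at the level of its coefficients, to the ordinary scalar Shanks system for the numerical sequence $(<\mathbf{y},\mathbf{S}_n>)$. Implicit throughout is the standard regularity assumption that the underlying Hankel--type determinant in $\Delta S_n,\ldots,\Delta S_{n+2k-1}$ does not vanish, so that both systems admit a unique solution; this is exactly the same non--degeneracy condition that was tacitly used to write the determinantal formula for $e_k(S_n)$ and $\mathbf{\widehat{e}}_k(\mathbf{S}_n)$.
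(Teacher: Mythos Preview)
Your proposal is correct and is exactly the direct argument the paper has in mind; indeed, the paper does not spell out a proof of this property at all, treating it as immediate from the definitions of the two systems once one notes that $<\mathbf{y},\Delta\mathbf{S}_{n+j}>=\Delta S_{n+j}$ by linearity. Your write-up simply makes this explicit, and the remark about the non-degeneracy assumption is appropriate and matches the tacit hypothesis behind the determinantal formulae.
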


We will see below that this property is a quite fundamental one, having important consequences for the convergence and the acceleration properties of the topological Shanks transformations.

\vskip 2mm

For the first and the second topological Shanks transformation, we have the

\begin{theorem}{\em \cite{etopo}}
For all $n$, $\mathbf {\widehat{e}}_k(\mathbf S_n)=\mathbf S$ and $\mathbf{ \widetilde e_k}(\mathbf S_n)=\mathbf S$ if $\exists a_0,\ldots,a_k$, with $a_0a_k \neq 0$ and $a_0+\cdots+a_k \neq 0$, such that, for all $n$,
$$a_0(\mathbf S_n-\mathbf S)+\cdots+a_k(\mathbf S_{n+k}-\mathbf S)=\mathbf 0.$$
\end{theorem}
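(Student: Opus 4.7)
The plan is to mimic the proof of Theorem \ref{cns} (the scalar case), using the fact made explicit in Property \ref{prp}: the defining system of the topological transformations has the same structure as the scalar Shanks system, just with $\langle\mathbf y,\Delta\mathbf S_i\rangle$ in place of $\Delta S_i$. So the task is to show that the coefficients $a_0,\dots,a_k$ supplied by the hypothesis solve (\ref{sTEA}), and that this forces $\widehat{\mathbf e}_k(\mathbf S_n)=\widetilde{\mathbf e}_k(\mathbf S_n)=\mathbf S$.

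First I would normalize. Since $a_0+\cdots+a_k\neq 0$ by assumption, rescale the coefficients so that $a_0+\cdots+a_k=1$; this rescaling does not affect the vector relation (\ref{eqd2}) (the right side is $\mathbf 0$). Under this normalization, (\ref{eqd2}) rearranges to the key identity
\begin{equation*}
\mathbf S \;=\; a_0\mathbf S_n+a_1\mathbf S_{n+1}+\cdots+a_k\mathbf S_{n+k},\qquad n=0,1,\ldots,
\end{equation*}
valid for every $n$. Shifting $n\mapsto n+k$ in this same identity gives the analogous representation $\mathbf S=a_0\mathbf S_{n+k}+\cdots+a_k\mathbf S_{n+2k}$, which will handle the second transformation.

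Next I would show these $a_i$ satisfy the system (\ref{sTEA}). Applying $\Delta$ to (\ref{eqd2}) eliminates $\mathbf S$ and yields $a_0\Delta\mathbf S_{n+j}+\cdots+a_k\Delta\mathbf S_{n+k+j}=\mathbf 0$ in $E$ for every $j\geq 0$. Pairing with $\mathbf y\in E^*$ (which is linear) turns each of these into the scalar relation $a_0\langle\mathbf y,\Delta\mathbf S_{n+j}\rangle+\cdots+a_k\langle\mathbf y,\Delta\mathbf S_{n+k+j}\rangle=0$. Taking $j=0,1,\ldots,k-1$ produces exactly the $k$ homogeneous rows of (\ref{sTEA}); combined with the normalization $a_0+\cdots+a_k=1$, the $a_i$ satisfy the full $(k+1)\times(k+1)$ system. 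Whenever the determinant in the denominator of the ratio formula is nonzero, the solution is unique, so $a_i^{(n,k)}=a_i$, and consequently
\begin{equation*}
\widehat{\mathbf e}_k(\mathbf S_n)=\sum_{i=0}^k a_i^{(n,k)}\mathbf S_{n+i}=\sum_{i=0}^k a_i\mathbf S_{n+i}=\mathbf S,
\end{equation*}
with the analogous computation, starting from the shifted identity, giving $\widetilde{\mathbf e}_k(\mathbf S_n)=\mathbf S$.

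The one delicate point is the degenerate case in which the denominator determinant vanishes, so that the $a_i^{(n,k)}$ are not uniquely determined. I would handle this by appealing directly to the determinantal formula: expanding the numerator determinant along the first row and using that the $(k+1)$-tuple $(a_0,\ldots,a_k)$ annihilates the lower $k$ rows and sums to $1$, one sees by a standard linear-algebra identity (replace the first row by $\sum_i a_i\cdot(\text{column }i)$-style manipulation, or equivalently use that $\sum_i a_i \mathbf S_{n+i}=\mathbf S$ to rewrite the top row) that the numerator equals $\mathbf S$ times the denominator. This is the only step that is not mechanical, but it is exactly parallel to the scalar proof of Theorem \ref{cns} and only requires that the numerator/denominator be built component-wise in $E$, which is precisely what Brezinski's definition of the transformations ensures.
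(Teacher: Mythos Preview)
The paper does not prove this theorem; it is quoted from \cite{etopo} without argument. Your approach is the natural one and is correct in its main part: normalize so that $\sum a_i=1$, verify that these $a_i$ satisfy the system (\ref{sTEA}), and conclude $a_i^{(n,k)}=a_i$ by uniqueness whenever the denominator determinant is nonzero, whence $\widehat{\mathbf e}_k(\mathbf S_n)=\sum a_i\mathbf S_{n+i}=\mathbf S$ and similarly for $\widetilde{\mathbf e}_k$.

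Your handling of the degenerate case, however, is misconceived. The denominator determinant is precisely the determinant of the coefficient matrix of (\ref{sTEA}), so ``denominator nonzero'' and ``system (\ref{sTEA}) has a unique solution'' are one and the same condition. Your identity ``numerator $=\mathbf S\cdot{}$denominator'' is indeed correct unconditionally (the cofactor vector $(c_i)$ always lies in the null space of the lower $k$ rows, hence is either proportional to $(a_i)$ or identically zero), and it furnishes a clean alternative to the uniqueness argument in the nondegenerate case. But when the denominator vanishes it yields $\mathbf 0/0$, not $\mathbf S$: the transformations $\widehat{\mathbf e}_k(\mathbf S_n)$ and $\widetilde{\mathbf e}_k(\mathbf S_n)$ are then simply undefined. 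The theorem tacitly assumes they exist, so there is no degenerate case to treat, and your first three paragraphs already constitute a complete proof. (The hypothesis $a_0a_k\neq 0$ plays no role in the argument; it is present only to fix $k$ as the exact order of the recurrence.)
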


\begin{rem}
Contrarily to Theorem \ref{cns}, the condition of this theorem is only sufficient. If the condition is satisfied, we also have,
for all $n$,
$$a_0<\mathbf y, \mathbf S_n-\mathbf S>+\cdots+a_k<\mathbf y,\mathbf S_{n+k}-\mathbf S>= 0.$$
\end{rem}
\indent Let us mention that other extrapolation methods based on a kernel of the same form, such as the {\sc rre},  the {\sc mpe}, the {\sc mmpe}, etc, have been defined (see, for example \cite{cbmrz}). They were recently studied in \cite{jsnuma}, including the first topological Shanks transformation.

The choice of $\mathbf y$ in the vector and matrix cases will be discussed in Section \ref{impl}.

\section{The topological $\boldsymbol\varepsilon$--algorithms}
\label{tea}

Let us now discuss how to compute recursively the elements $\mathbf {\widehat{e}}_k(\mathbf S_n) \in E$ and $\mathbf{\widetilde e}_k(\mathbf S_n) \in E$.

\vskip 2mm

The elements $\mathbf {\widehat{e}}_k(\mathbf S_n) \in E$ can be recursively computed by the
{\it first topological} $\varepsilon$--{\it algorithm} ({\sc tea{\small 1}}) \cite{etopo} whose rules are
\begin{equation}
\label{etop}
\left\{
\begin{array}{lcl}
\boldsymbol{\widehat{\varepsilon}}_{-1}^{(n)} \!\!&\!=\!&\!\! \mathbf 0 \in E^*, \qquad n=0,1,\ldots,\\
\boldsymbol{\widehat{\varepsilon}}_{0}^{(n)} \!\!&\!=\!&\!\! \mathbf S_n \in E, \qquad n=0,1,\ldots,\\
\boldsymbol{\widehat{\varepsilon}}_{2k+1}^{(n)} \!\!&\!=\!&\!\!  \boldsymbol{\widehat{\varepsilon}}_{2k-1}^{(n+1)}+
\displaystyle \frac{
{\mathbf{y}}}{<{\mathbf{y}},{\boldsymbol{\widehat{\varepsilon}}}^{(n+1)}_{2k}
- {\boldsymbol{\widehat{\varepsilon}}}^{(n)}_{2k}>}
\in E^*, \; \; k,n=0,1,\ldots,\\
\boldsymbol{\widehat{\varepsilon}}_{2k+2}^{(n)} \!\!&\!=\!&\!\!  \boldsymbol{\widehat{\varepsilon}}_{2k}^{(n+1)}+
\displaystyle \frac{
{\boldsymbol{\widehat{\varepsilon}}}_{2k}^{(n+1)}-
{\boldsymbol{\widehat{\varepsilon}}}_{2k}^{(n)}}{ <{\boldsymbol{\widehat{\varepsilon}}}^{(n+1)}_{2k+1}  -
{\boldsymbol{\widehat{\varepsilon}}}^{(n)}_{2k+1}, {\boldsymbol{\widehat{\varepsilon}}}^{(n+1)}_{2k}  -
{\boldsymbol{\widehat{\varepsilon}}}^{(n)}_{2k}>} \in E, \; \; k,n=0,1,\ldots
\end{array}
\right.
\end{equation}

Let us remind that the idea which led to the discovery of the two topological $\varepsilon$--algorithms for implementing the two versions of the topological Shanks sequence transformation was based on the definition of the inverse of a couple $(\mathbf u,\mathbf y)
\in E \times E^*$ defined as $\mathbf u^{-1}=\mathbf y/<\mathbf y,\mathbf u> \in E^*$ and $\mathbf y^{-1}=\mathbf u/<\mathbf y,\mathbf u> \in E$ \cite{etopo}. Keeping this definition in mind, we see that the inverses used for the
odd and even elements are, in fact, the inverses of the couple $({\boldsymbol{\widehat{\varepsilon}}}_{2k}^{(n+1)}-
{\boldsymbol{\widehat{\varepsilon}}}_{2k}^{(n)}, \mathbf y) \in E \times E^*$.

\vskip 2mm

The elements $\mathbf{\widetilde e_k}(\mathbf S_n) \in E$ of the second topological  Shanks transformation can be recursively computed by the
{\it second topological} $\varepsilon$--{\it algorithm} ({\sc tea{\small 2}}) \cite{etopo} whose rules are
(see \cite{cbmrz} for a {\sc fortran} subroutine)
\begin{equation}
\label{etop2}
\left\{
\begin{array}{lcl@{}}
\boldsymbol{\widetilde \varepsilon}_{-1}^{(n)} \!\!&\!=\!&\!\! \mathbf 0 \in E^*, \qquad n=0,1,\ldots,\\
\boldsymbol{\widetilde \varepsilon}_{0}^{(n)} \!\!&\!=\!&\!\! \mathbf S_n \in E, \qquad n=0,1,\ldots,\\
\boldsymbol{\widetilde \varepsilon}_{2k+1}^{(n)} \!\!&\!=\!&\!\!  \boldsymbol{\widetilde \varepsilon}_{2k-1}^{(n+1)}+
\displaystyle \frac{
{\mathbf{y}}}{<{\mathbf{y}},{\boldsymbol{\widetilde \varepsilon}}^{(n+1)}_{2k}
- {\boldsymbol{\widetilde \varepsilon}}^{(n)}_{2k}>}
\in E^*, \; \; k,n=0,1,\ldots,\\
\boldsymbol{\widetilde \varepsilon}_{2k+2}^{(n)} \!\!&\!=\!&\!\!  \boldsymbol{\widetilde \varepsilon}_{2k}^{(n+1)}\!+\!
\displaystyle \frac{
{\boldsymbol{\widetilde \varepsilon}}_{2k}^{(n+2)}-
{\boldsymbol{\widetilde \varepsilon}}_{2k}^{(n+1)}}{ <{\boldsymbol{\widetilde \varepsilon}}^{(n+1)}_{2k+1}  -
{\boldsymbol{\widetilde \varepsilon}}^{(n)}_{2k+1}, {\boldsymbol{\widetilde \varepsilon}}^{(n+2)}_{2k}  -
{\boldsymbol{\widetilde \varepsilon}}^{(n+1)}_{2k}>} \in E, \;  k,n=0,1,\ldots
\end{array}
\right.
\end{equation}

The following property holds
\begin{prop} {\em \cite{etopo}} \label{pro}
$$
\begin{array}{lll}
\boldsymbol{\widehat{\varepsilon}}_{2k}^{(n)}=\mathbf {\widehat{e}}_k(\mathbf S_n),  \qquad &
<\mathbf y, {\boldsymbol{\widehat{\varepsilon}}}^{(n)}_{2k}>=e_k(<\mathbf y, \mathbf S_n>),
 & \\
\boldsymbol{\widehat{\varepsilon}}_{2k+1}^{(n)}=
\mathbf y/<\mathbf y, \mathbf {\widehat{e}}_k(\Delta \mathbf S_n)>, \qquad &
{\boldsymbol{\widehat{\varepsilon}}}^{(n)}_{2k+1}=\mathbf y/ e_k(<\mathbf y, \Delta \mathbf S_n>),
\quad & k,n=0,1,\ldots.
\end{array}
$$
These relations are also true for the $\boldsymbol{\widetilde \varepsilon}_{k}^{(n)}$'s and the
$\mathbf {\widetilde{e}}_k$.
\end{prop}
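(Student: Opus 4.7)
The plan is to prove all four displayed relations by a joint induction on $k$, via the sharper auxiliary identities
\begin{equation*}
<\mathbf y,\boldsymbol{\widehat{\varepsilon}}_{2k}^{(n)}>=\varepsilon_{2k}^{(n)},\qquad \boldsymbol{\widehat{\varepsilon}}_{2k+1}^{(n)}=\mathbf y\,\varepsilon_{2k+1}^{(n)},
\end{equation*}
where $\varepsilon_j^{(n)}$ denotes the scalar $\varepsilon$--algorithm output (\ref{esca}) applied to $S_n=<\mathbf y,\mathbf S_n>$. Combined with Proposition \ref{esc} (which identifies $\varepsilon_{2k}^{(n)}=e_k(S_n)$ and $\varepsilon_{2k+1}^{(n)}=1/e_k(\Delta S_n)$) and Proposition \ref{prp} (which transfers $e_k(<\mathbf y,\cdot>)$ into $<\mathbf y,\mathbf{\widehat e}_k(\cdot)>$), these two auxiliary identities immediately yield the four relations of the statement. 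The base case $k=0$ is trivial: $\boldsymbol{\widehat{\varepsilon}}_0^{(n)}=\mathbf S_n=\mathbf{\widehat e}_0(\mathbf S_n)$ and $\boldsymbol{\widehat{\varepsilon}}_1^{(n)}=\mathbf y/\Delta S_n=\mathbf y\,\varepsilon_1^{(n)}$.

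For the inductive step, telescope the odd rule of (\ref{etop}) down to $\boldsymbol{\widehat{\varepsilon}}_{-1}^{(n+k+1)}=\mathbf 0$, and use the inductive identity $<\mathbf y,\boldsymbol{\widehat{\varepsilon}}_{2i}^{(m)}>=\varepsilon_{2i}^{(m)}$ to evaluate each denominator. This yields
\begin{equation*}
\boldsymbol{\widehat{\varepsilon}}_{2k+1}^{(n)}=\mathbf y\sum_{i=0}^{k}\frac{1}{\Delta\varepsilon_{2i}^{(n+k-i)}}=\mathbf y\,\varepsilon_{2k+1}^{(n)},
\end{equation*}
the second equality being Proposition \ref{prop1}. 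Feeding this back into the even rule of (\ref{etop}) makes its denominator collapse to the scalar $\Delta\varepsilon_{2k+1}^{(n)}\,\Delta\varepsilon_{2k}^{(n)}$, so the rule reduces to the affine combination
\begin{equation*}
\boldsymbol{\widehat{\varepsilon}}_{2k+2}^{(n)}=(1+\alpha_n)\,\boldsymbol{\widehat{\varepsilon}}_{2k}^{(n+1)}-\alpha_n\,\boldsymbol{\widehat{\varepsilon}}_{2k}^{(n)},\qquad \alpha_n=\frac{1}{\Delta\varepsilon_{2k+1}^{(n)}\Delta\varepsilon_{2k}^{(n)}}.
\end{equation*}
A direct contraction with $\mathbf y$ gives $<\mathbf y,\boldsymbol{\widehat{\varepsilon}}_{2k+2}^{(n)}>=\varepsilon_{2k}^{(n+1)}+1/\Delta\varepsilon_{2k+1}^{(n)}=\varepsilon_{2k+2}^{(n)}$, delivering the auxiliary even identity at level $k+1$.

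The one genuinely informative step, and what I see as the heart of the proof, is to upgrade this scalar equality to the full vector identity $\boldsymbol{\widehat{\varepsilon}}_{2k+2}^{(n)}=\mathbf{\widehat e}_{k+1}(\mathbf S_n)$. The key observation is that $\alpha_n$ depends only on scalars derived from $<\mathbf y,\mathbf S_m>$, so the very same affine combination with the very same $\alpha_n$ expresses $\varepsilon_{2k+2}^{(n)}$ in terms of $\varepsilon_{2k}^{(n+1)}$ and $\varepsilon_{2k}^{(n)}$. By induction, $\boldsymbol{\widehat{\varepsilon}}_{2k}^{(m)}$ is the same affine combination of $\mathbf S_m,\ldots,\mathbf S_{m+k}$ as $\varepsilon_{2k}^{(m)}$ is of $S_m,\ldots,S_{m+k}$. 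Propagating the coefficients of the scalar expansion $\varepsilon_{2k+2}^{(n)}=\sum_{i=0}^{k+1}a_i^{(n,k+1)}S_{n+i}$ through the vector affine combination therefore produces
\begin{equation*}
\boldsymbol{\widehat{\varepsilon}}_{2k+2}^{(n)}=\sum_{i=0}^{k+1}a_i^{(n,k+1)}\,\mathbf S_{n+i},
\end{equation*}
and Proposition \ref{prp} identifies these $a_i^{(n,k+1)}$ with the solution of the topological system (\ref{sTEA}) defining $\mathbf{\widehat e}_{k+1}(\mathbf S_n)$. This closes the induction. The argument for {\sc tea{\small 2}} and $\mathbf{\widetilde e}_k$ is identical; only the index shift in the numerator of its even rule in (\ref{etop2}) changes, shifting the first row of the corresponding vector combination from $\mathbf S_n,\ldots,\mathbf S_{n+k+1}$ to $\mathbf S_{n+k+1},\ldots,\mathbf S_{n+2k+2}$ and hence producing $\mathbf{\widetilde e}_{k+1}(\mathbf S_n)$.
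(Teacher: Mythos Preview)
The paper does not supply its own proof of this property; it merely cites \cite{etopo}, where the identities were originally obtained by applying extensions of Sylvester/Schweins determinantal identities directly to the ratio--of--determinants formulae for $\mathbf{\widehat e}_k(\mathbf S_n)$ and checking that these ratios satisfy the recursions (\ref{etop}).

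Your induction correctly establishes the two auxiliary identities $<\mathbf y,\boldsymbol{\widehat{\varepsilon}}_{2k}^{(n)}>=\varepsilon_{2k}^{(n)}$ and $\boldsymbol{\widehat{\varepsilon}}_{2k+1}^{(n)}=\mathbf y\,\varepsilon_{2k+1}^{(n)}$, and with them the second, third and fourth displayed relations. The gap lies precisely in what you flag as ``the heart of the proof'', the upgrade to $\boldsymbol{\widehat{\varepsilon}}_{2k+2}^{(n)}=\mathbf{\widehat e}_{k+1}(\mathbf S_n)$. You obtain $\boldsymbol{\widehat{\varepsilon}}_{2k+2}^{(n)}=\sum_i c_i\,\mathbf S_{n+i}$ and $\varepsilon_{2k+2}^{(n)}=\sum_i c_i\,S_{n+i}$ with the \emph{same} scalars $c_i$, and you know that $\varepsilon_{2k+2}^{(n)}=\sum_i a_i^{(n,k+1)}S_{n+i}$. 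But the equality $\sum_i c_i S_{n+i}=\sum_i a_i^{(n,k+1)} S_{n+i}$ is an equality of two \emph{numbers}; it does not force $c_i=a_i^{(n,k+1)}$, because the $S_{n+i}$ are not independent indeterminates --- the coefficients $c_i$ and $a_i^{(n,k+1)}$ are themselves rational functions of these very $S_m$. Your phrase ``propagating the coefficients'' glosses over exactly this inference.

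To rescue your approach one would have to show directly that the $c_i$ solve the system (\ref{sTEA}) at level $k+1$. The normalization $\sum_i c_i=1$ and the homogeneous equations for $l=0,\ldots,k-1$ do follow immediately from the inductive systems for $(n,k)$ and $(n+1,k)$, but the final equation ($l=k$) reduces to the identity $(1+\alpha_n)\sum_j a_j^{(n+1,k)}\Delta S_{n+k+1+j}=\alpha_n\sum_j a_j^{(n,k)}\Delta S_{n+k+j}$, which is not a consequence of anything already proved and is comparable in difficulty to the determinantal argument of \cite{etopo}.
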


\begin{rem} Due to the first rule of the algorithms, we have
$\boldsymbol{\widehat{\varepsilon}}_{2k+1}^{(n)}=\mathbf y \sum_{i=0}^k 1/<\mathbf y,
\boldsymbol{\widehat{\varepsilon}}_{2k}^{(n+1)}-
\boldsymbol{\widehat{\varepsilon}}_{2k}^{(n)}>$,
and the same relation for $\boldsymbol{\widetilde \varepsilon}_{2k+1}^{(n)}$, which shows
that these elements of $E^*$ are a multiple of $\mathbf y$.
\end{rem}

\begin{table}
{\small
$$
\begin{array}{|c@{~}c@{~}c@{~}c@{~}c||c@{~}c@{~}c@{~}c@{~}c|c@{~}c@{~}c@{~}c@{~}c|}
\multicolumn{5}{@{}c@{}}{\mbox{\small \rm Odd rule ({\sc tea1,
tea2})}} & \multicolumn{5}{@{}c@{}}{\mbox{\small \rm Even rule
({\sc tea1})}} &
\multicolumn{5}{@{}c}{\mbox{\small \rm Even rule  ({\sc tea2})}}\\
\hline &&&&&
&&&&&&&&&\\
&& \boldsymbol{{\varepsilon}}_{2k}^{(n)}&&&
\boldsymbol{\widehat{\varepsilon}}_{2k}^{(n)}&&&& &&&&&
\\
&\nearrow&&\searrow&& &\searrow&&& &&&&&
\\
\boldsymbol{{\varepsilon}}_{2k-1}^{(n+1)}&&&&
\fbox{$\boldsymbol{{\varepsilon}}_{2k+1}^{(n)}$}& &&
\boldsymbol{\widehat{\varepsilon}}_{2k+1}^{(n)}&&& &&
\boldsymbol{\widetilde \varepsilon}_{2k+1}^{(n)}&&
\\
&\searrow&&\nearrow&& &\nearrow&&\searrow&& &\nearrow&&\searrow&
\\
&& \boldsymbol{{\varepsilon}}_{2k}^{(n+1)}&&&
\boldsymbol{\widehat{\varepsilon}}_{2k}^{(n+1)}&&&&
\fbox{$\boldsymbol{\widehat{\varepsilon}}_{2k+2}^{(n)}$}&
\boldsymbol{\widetilde \varepsilon}_{2k}^{(n+1)}&&&&
\fbox{$\boldsymbol{\widetilde \varepsilon}_{2k+2}^{(n)}$}
\\
&&&&& &\searrow&&\nearrow&& &\searrow&&\nearrow&
\\
&&&&& && \boldsymbol{\widehat{\varepsilon}}_{2k+1}^{(n+1)}&&& &&
\boldsymbol{\widetilde \varepsilon}_{2k+1}^{(n+1)}&&
\\
&&&&& &&&&& &\nearrow&&&
\\
&&&&& &&&&& \boldsymbol{\widetilde \varepsilon}_{2k}^{(n+2)}&&&&
\\ &&&&&
&&&&&&&&&\\
\hline
\end{array}
$$
}
\caption{The relations for the first ({\sc tea1}) and the second
({\sc tea2}) topological $\varepsilon$--algorithms (in the odd
rule $\boldsymbol{{\varepsilon}}$ is
$\boldsymbol{\widehat{\varepsilon}}$ for {\sc tea1} and
$\boldsymbol{\widetilde{\varepsilon}}$ for {\sc tea2}).
\label{TEA12}}
\end{table}

If $E = \mathbb{R}$ or $\mathbb C$, the rules (\ref{etop}) and (\ref{etop2}) of the first
and the second topological $\varepsilon$--algorithms reduce to those of the scalar $\varepsilon$--algorithm.

The relationships between the elements of $E$ and $E^*$ in the
topological $\varepsilon$--array involved in these two topological
algorithms are showed in Table \ref{TEA12}. The odd rule has the
same structure as in the scalar $\varepsilon$--algorithm, but the
even ones need an additional term of the topological
$\varepsilon$--array.

 In the first and the second topological
$\varepsilon$--algorithm, the difficulty is to compute the duality
products appearing in their denominators since they involve linear
functionals which cannot be handled on a computer. This problem
will be solved by the new algorithms given in the next Section.

\section{The simplified topological $\boldsymbol\varepsilon$--algorithms}
\label{tsea}

When $E$ is an arbitrary vector space, the problem with the rules of the topological $\varepsilon$--algorithms,  is to be able to compute, for all $k$ and $n$, the duality product with the odd lower index terms $\boldsymbol{\widehat{\varepsilon}}_{2k+1}^{(n)}$'s and the $\boldsymbol{\widetilde\varepsilon}_{2k+1}^{(n)}$'s
on a computer. We will now derive new algorithms for implementing the two topological Shanks sequence transformations of Section \ref{tsst} which avoid the manipulation of elements of $E^*$, and the use of the duality product with $\mathbf y$ into the rules of the algorithms. In these new algorithms, the linear functional $\mathbf y$ will only have to be applied to the terms of the initial sequence $(\mathbf S_n)$.
Moreover, they require the storage of a fewer number of elements of $E$ and no element of $E^*$, since they only connect terms with an even lower index in the topological $\varepsilon$--array.
Their implementation will be described in Section \ref{impl}.

\subsection{The first simplified topological $\boldsymbol\varepsilon$--algorithm}

Applying Wynn's scalar $\varepsilon$--algorithm (\ref{esca}) to the sequence $(S_n=<\mathbf y, \mathbf S_n>)$,  we have, from Property \ref{pro}, $\varepsilon_{2k}^{(n)}= e_k(<\mathbf y, \mathbf S_n>)$ and, we obtain, by  Property \ref{esc},
\begin{eqnarray*}
{\boldsymbol{\widehat{\varepsilon}}}^{(n+1)}_{2k+1}-{\boldsymbol{\widehat{\varepsilon}}}^{(n)}_{2k+1} &=&
\frac{\mathbf y}{<\mathbf y, \mathbf {\widehat{e}}_k(\Delta \mathbf S_{n+1})>}-\frac{\mathbf y}{<\mathbf y, \mathbf {\widehat{e}}_k(\Delta \mathbf S_{n})>}\\
&=& \mathbf y \left( \frac{1}{ e_k(<\mathbf y,\Delta \mathbf S_{n+1}>)}- \frac{1}{ e_k(<\mathbf y,\Delta \mathbf S_{n}>)}\right)\\
&=& \mathbf y ( \varepsilon_{2k+1}^{(n+1)}- \varepsilon_{2k+1}^{(n)}).
\end{eqnarray*}

Thus, the last relation in (\ref{etop}) becomes

$$\boldsymbol{\widehat{\varepsilon}}_{2k+2}^{(n)} =  \boldsymbol{\widehat{\varepsilon}}_{2k}^{(n+1)}+
\frac{{\boldsymbol{\widehat{\varepsilon}}}_{2k}^{(n+1)}-{\boldsymbol{\widehat{\varepsilon}}}_{2k}^{(n)}}
{<\mathbf y, {\boldsymbol{\widehat{\varepsilon}}}^{(n+1)}_{2k}  -
{\boldsymbol{\widehat{\varepsilon}}}^{(n)}_{2k}> ( \varepsilon_{2k+1}^{(n+1)}- \varepsilon_{2k+1}^{(n)})}, \qquad k,n=0,1,\ldots .$$

Moreover, due to Property \ref{pro}, the quantities
$<\mathbf y, {\boldsymbol{\widehat{\varepsilon}}}^{(n+1)}_{2k}  -{\boldsymbol{\widehat{\varepsilon}}}^{(n)}_{2k}>$
can be computed by the scalar $\varepsilon$--algorithm, and we finally obtain the rule

\begin{equation}
\label{final}
\boldsymbol{\widehat{\varepsilon}}_{2k+2}^{(n)} =  \boldsymbol{\widehat{\varepsilon}}_{2k}^{(n+1)}+
\frac{1}
{( \varepsilon^{(n+1)}_{2k}  - \varepsilon^{(n)}_{2k})
( \varepsilon_{2k+1}^{(n+1)}- \varepsilon_{2k+1}^{(n)})}
({\boldsymbol{\widehat{\varepsilon}}}_{2k}^{(n+1)}-{\boldsymbol{\widehat{\varepsilon}}}_{2k}^{(n)}),
\; k,n=0,1,\ldots,
\end{equation}
with $\boldsymbol{\widehat{\varepsilon}}_{0}^{(n)}= \mathbf S_n \in E$, $n=0,1,\ldots$.

\vskip 2mm

This relation was already given in \cite{etopo} as Property 10 but not into an algorithmic form.
It shows that the first topological Shanks transformation can now be implemented by a triangular
scheme involving the scalar $\varepsilon$--algorithm and elements of $E$ exclusively, instead of
extended rhombus rules needing the duality product with elements of $E^*$. Moreover, we now have
only one rule instead of two.

Thanks to the recursive rule of the scalar $\varepsilon$--algorithm, the algorithm (\ref{final})
can also be written under one of the following equivalent forms

\begin{eqnarray}
\boldsymbol{\widehat{\varepsilon}}_{2k+2}^{(n)} &=&  \boldsymbol{\widehat{\varepsilon}}_{2k}^{(n+1)}+
\frac{ \varepsilon_{2k+1}^{(n)}- \varepsilon_{2k-1}^{(n+1)}}
{ \varepsilon_{2k+1}^{(n+1)}- \varepsilon_{2k+1}^{(n)}}
({\boldsymbol{\widehat{\varepsilon}}}_{2k}^{(n+1)}-{\boldsymbol{\widehat{\varepsilon}}}_{2k}^{(n)}),
\label{final1}\\
\boldsymbol{\widehat{\varepsilon}}_{2k+2}^{(n)} &=&  \boldsymbol{\widehat{\varepsilon}}_{2k}^{(n+1)}+
\frac{ \varepsilon^{(n)}_{2k+2}  - \varepsilon^{(n+1)}_{2k}}
{ \varepsilon^{(n+1)}_{2k}  - \varepsilon^{(n)}_{2k}}
({\boldsymbol{\widehat{\varepsilon}}}_{2k}^{(n+1)}-{\boldsymbol{\widehat{\varepsilon}}}_{2k}^{(n)}),
\label{final2}\\
\boldsymbol{\widehat{\varepsilon}}_{2k+2}^{(n)} &=&  \boldsymbol{\widehat{\varepsilon}}_{2k}^{(n+1)}+
( \varepsilon_{2k+1}^{(n)}- \varepsilon_{2k-1}^{(n+1)})
( \varepsilon^{(n)}_{2k+2}  - \varepsilon^{(n+1)}_{2k})
({\boldsymbol{\widehat{\varepsilon}}}_{2k}^{(n+1)}-{\boldsymbol{\widehat{\varepsilon}}}_{2k}^{(n)}).
\label{final3}
\end{eqnarray}

Notice that (\ref{final2}) can also be written
\begin{eqnarray}
{\boldsymbol{\widehat{\varepsilon}}}_{2k+2}^{(n)}&=&\frac{\varepsilon^{(n)}_{2k+2}  - \varepsilon^{(n)}_{2k}}{\varepsilon^{(n+1)}_{2k}  - \varepsilon^{(n)}_{2k}}{\boldsymbol{\widehat{\varepsilon}}}_{2k}^{(n+1)}-\frac{\varepsilon^{(n)}_{2k+2}  - \varepsilon^{(n+1)}_{2k}}{\varepsilon^{(n+1)}_{2k}  - \varepsilon^{(n)}_{2k}}{\boldsymbol{\widehat{\varepsilon}}}_{2k}^{(n)} \label{final21}\\
&=&{\boldsymbol{\widehat{\varepsilon}}}_{2k}^{(n)}+\frac{\varepsilon^{(n)}_{2k+2}  - \varepsilon^{(n)}_{2k}}{\varepsilon^{(n+1)}_{2k}  - \varepsilon^{(n)}_{2k}}({\boldsymbol{\widehat{\varepsilon}}}_{2k}^{(n+1)}-
{\boldsymbol{\widehat{\varepsilon}}}_{2k}^{(n)}).\label{final22}
\end{eqnarray}

This new algorithm, in any of the preceding forms, is called the
{\it first simplified topological} $\varepsilon$--{\it algorithm}, and it is denoted by {\sc stea{\small 1}}.

Taking the duality product of $\mathbf y$ with any of the preceding relations (\ref{final}--\ref{final22}) allows to recover the rule of the scalar $\varepsilon$--algorithm or leads to an identity. The same is true when $E$ is $\mathbb R$ or
$\mathbb C$.

 From (\ref{final21}), we have the

\begin{prop}
The computation of ${\boldsymbol{\widehat{\varepsilon}}}_{2k+2}^{(n)}$ is stable for all $n$ if $\exists M_k$, independent of $n$,
such that
$$\left|\frac{\varepsilon^{(n)}_{2k+2}  - \varepsilon^{(n)}_{2k}}{\varepsilon^{(n+1)}_{2k}  - \varepsilon^{(n)}_{2k}}\right|+
\left|\frac{\varepsilon^{(n)}_{2k+2}  - \varepsilon^{(n+1)}_{2k}}{\varepsilon^{(n+1)}_{2k}  - \varepsilon^{(n)}_{2k}}\right|
\leq M_k.$$
\end{prop}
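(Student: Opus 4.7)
The plan is to read the stability claim as a statement about how perturbations (round-off, input uncertainty) in the two inputs $\boldsymbol{\widehat{\varepsilon}}_{2k}^{(n)}$ and $\boldsymbol{\widehat{\varepsilon}}_{2k}^{(n+1)}$ propagate through formula~(\ref{final21}) to produce the output $\boldsymbol{\widehat{\varepsilon}}_{2k+2}^{(n)}$. Since (\ref{final21}) already exhibits the new vector explicitly as a linear combination with scalar coefficients of the two previous vectors, the whole argument reduces to a standard triangle-inequality estimate; the non-trivial content of the Property is merely identifying $M_k$ as the correct amplification constant.

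Concretely, I would introduce perturbed inputs $\boldsymbol{\widehat{\varepsilon}}_{2k}^{(n)}+\boldsymbol{\delta}^{(n)}$ and $\boldsymbol{\widehat{\varepsilon}}_{2k}^{(n+1)}+\boldsymbol{\delta}^{(n+1)}$, assume for the moment that the scalar values $\varepsilon_{2k}^{(n)},\varepsilon_{2k}^{(n+1)},\varepsilon_{2k+2}^{(n)}$ (computed separately by the scalar $\varepsilon$--algorithm from $(<\mathbf y,\mathbf S_j>)$) are given exactly, and denote the two scalar coefficients in (\ref{final21}) by
\[
\alpha_n=\frac{\varepsilon^{(n)}_{2k+2}-\varepsilon^{(n)}_{2k}}{\varepsilon^{(n+1)}_{2k}-\varepsilon^{(n)}_{2k}},\qquad
\beta_n=\frac{\varepsilon^{(n)}_{2k+2}-\varepsilon^{(n+1)}_{2k}}{\varepsilon^{(n+1)}_{2k}-\varepsilon^{(n)}_{2k}}.
\]
Substituting the perturbed inputs into (\ref{final21}) and subtracting the unperturbed expression gives, by linearity, a perturbation of the output equal to $\alpha_n\boldsymbol{\delta}^{(n+1)}-\beta_n\boldsymbol{\delta}^{(n)}$. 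Taking any norm on $E$ and applying the triangle inequality yields the bound $|\alpha_n|\,\|\boldsymbol{\delta}^{(n+1)}\|+|\beta_n|\,\|\boldsymbol{\delta}^{(n)}\|\le M_k\,\max(\|\boldsymbol{\delta}^{(n)}\|,\|\boldsymbol{\delta}^{(n+1)}\|)$, which is precisely the hypothesis. A quick check (subtracting the two numerators) shows $\alpha_n-\beta_n=1$, confirming that the combination is affine and that a uniformly bounded $M_k$ is what one should ask for.

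The only mildly delicate point, and the one I would flag as the \emph{main interpretive obstacle} rather than a technical one, is what the word \emph{stable} means here: the Property implicitly isolates the propagation of errors living in $E$ through a single application of the rule, on the assumption that the scalar quantities $\varepsilon_{2k}^{(n)},\varepsilon_{2k+1}^{(n)},\varepsilon_{2k+2}^{(n)}$ are accurate (or separately controlled). This is reasonable, because those scalars obey the numerically well-studied Wynn rule and can be computed to high accuracy in $\mathbb K$; the genuinely vectorial cost sits in the coefficients $\alpha_n,\beta_n$ multiplying the $E$-valued quantities, and this is exactly what $M_k$ bounds. With that reading fixed, the proof is essentially the two-line triangle-inequality argument above, and no further computation is needed.
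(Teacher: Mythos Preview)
Your proposal is correct and follows exactly the approach implied by the paper, which does not spell out a proof at all but simply prefaces the Property with ``From (\ref{final21}), we have the''; you have correctly filled in the intended triangle-inequality argument showing that the error amplification factor of the affine combination (\ref{final21}) is $|\alpha_n|+|\beta_n|$. Your observation that $\alpha_n-\beta_n=1$ and your discussion of what ``stable'' means here are useful clarifications that the paper leaves implicit.
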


\begin{rem}
Although the $\boldsymbol{\widehat{\varepsilon}}_{2k+1}^{(n)}$'s are no longer needed,  it holds from Property \ref{pro} that
$\boldsymbol{\widehat{\varepsilon}}_{2k+1}^{(n)} =  \boldsymbol{\widehat{\varepsilon}}_{2k-1}^{(n+1)}+
{\mathbf{y}}/( \varepsilon^{(n+1)}_{2k}- \varepsilon^{(n)}_{2k})$, and thus, similarly to Property \ref{prop1}, we obtain
$\boldsymbol{\widehat{\varepsilon}}_{2k+1}^{(n)}= {\bf y} \sum_{i=0}^k 1/\Delta \varepsilon_{2i}^{(n+k-i)}$.
Since $\Delta S_n=<\mathbf y, \Delta \mathbf S_n>$, we also have, from what precedes and Property \ref{pro},
${\boldsymbol{\widehat{\varepsilon}}}^{(n)}_{2k+1}=\mathbf y   \varepsilon_{2k+1}^{(n)}$ and $\varepsilon_{2k+1}^{(n)}=1/ e_k(<\mathbf y, \Delta \mathbf S_n>)$.
These relations also hold for the $\boldsymbol{\widetilde\varepsilon}_{2k+2}^{(n)}$'s of the second simplified topological $\varepsilon$--algorithm (see below).
\end{rem}

\subsection{The second simplified topological $\boldsymbol\varepsilon$--algorithm}
\label{STEAsection}
Since similar algebraic properties hold for the second topological Shanks transformation
and the second topological $\varepsilon$--algorithm, we can similarly derive a
{\it second simplified topological $\varepsilon$--algorithm}, denoted by {\sc stea{\small 2}}.
Its rules can be written in any of the four following equivalent forms

\begin{eqnarray*}
\boldsymbol{\widetilde\varepsilon}_{2k+2}^{(n)} &=&  \boldsymbol{\widetilde\varepsilon}_{2k}^{(n+1)}+
\frac{1}
{( \varepsilon^{(n+2)}_{2k}  - \varepsilon^{(n+1)}_{2k})
( \varepsilon_{2k+1}^{(n+1)}- \varepsilon_{2k+1}^{(n)})}
({\boldsymbol{\widetilde\varepsilon}}_{2k}^{(n+2)}-{\boldsymbol{\widetilde\varepsilon}}_{2k}^{(n+1)}), \\
\boldsymbol{\widetilde\varepsilon}_{2k+2}^{(n)} &=&  \boldsymbol{\widetilde\varepsilon}_{2k}^{(n+1)}+
\frac{ \varepsilon_{2k+1}^{(n+1)}- \varepsilon_{2k-1}^{(n+2)}}
{ \varepsilon_{2k+1}^{(n+1)}- \varepsilon_{2k+1}^{(n)}}
({\boldsymbol{\widetilde\varepsilon}}_{2k}^{(n+2)}-{\boldsymbol{\widetilde\varepsilon}}_{2k}^{(n+1)}),\\
\boldsymbol{\widetilde\varepsilon}_{2k+2}^{(n)} &=&  \boldsymbol{\widetilde\varepsilon}_{2k}^{(n+1)}+
\frac{ \varepsilon^{(n)}_{2k+2}  - \varepsilon^{(n+1)}_{2k}}
{ \varepsilon^{(n+2)}_{2k}  - \varepsilon^{(n+1)}_{2k}}
({\boldsymbol{\widetilde\varepsilon}}_{2k}^{(n+2)}-{\boldsymbol{\widetilde\varepsilon}}_{2k}^{(n+1)}),\\
{\boldsymbol{\widetilde\varepsilon}_{2k+2}}^{(n)} &=&  \boldsymbol{\widetilde\varepsilon}_{2k}^{(n+1)}+
( \varepsilon_{2k+1}^{(n+1)}- \varepsilon_{2k-1}^{(n+2)})
( \varepsilon^{(n)}_{2k+2}  - \varepsilon^{(n+1)}_{2k})
({\boldsymbol{\widetilde\varepsilon}}_{2k}^{(n+2)}-{\boldsymbol{\widetilde\varepsilon}}_{2k}^{(n+1)}),
\end{eqnarray*}
with $\boldsymbol{\widetilde{\varepsilon}}_{0}^{(n)}= \mathbf S_n \in E$, $n=0,1,\ldots$.

In these formulae, the scalar quantities are the same as those used in the first simplified topological $\varepsilon$--algorithm, that is they are obtained by applying the scalar $\varepsilon$--algorithm
to the sequence $(<\mathbf y, \mathbf S_n>)$.

\begin{table}[htb]
\[
\begin{array}{|ccc|ccc|}
\multicolumn{3}{@{}c@{}}{\mbox{\rm {\small \sc stea1}}} &
\multicolumn{3}{@{}c@{}}{\mbox{\rm  {\small \sc stea2}}} \\
\hline &&&&&\\
\boldsymbol{\widehat{\varepsilon}}_{2k}^{(n)}&& &&&\\
&\searrow&&&&\\
\boldsymbol{\widehat{\varepsilon}}_{2k}^{(n+1)}&\longrightarrow&
\fbox{$\boldsymbol{\widehat{\varepsilon}}_{2k+2}^{(n)}$} &
\boldsymbol{\widetilde\varepsilon}_{2k}^{(n+1)}&\longrightarrow&
\fbox{$\boldsymbol{\widetilde\varepsilon}_{2k+2}^{(n)}$}\\
&&&&\nearrow&\\
&&&\boldsymbol{\widetilde\varepsilon}_{2k}^{(n+2)}&&\\
 &&&&&\\
 \hline
\end{array}
\]
\caption{The relations for the first ({\small \sc stea1}) and the
second ({\small \sc stea2}) simplified topological
$\varepsilon$--algorithms. \label{STEA12} }
\end{table}

In Table \ref{STEA12}, we show the very simple triangular
relationships between the elements of $E$ in the topological
$\varepsilon$--array for the two simplified algorithms. Remark
that only the even terms are needed and computed.

\begin{rem}
Since the $\boldsymbol{{\widehat{\varepsilon}}}_{2k}^{(n)}$ and the $\boldsymbol{\widetilde\varepsilon}_{2k}^{(n)}$ are related by a triangular recursive scheme, they satisfy the theory of {\it reference functionals} developed in \cite{cbgw}.
\end{rem}

\section{Convergence and acceleration}
\label{conacc}

Several convergence and acceleration results for the topological $\varepsilon$--algorithm were already given in \cite{etopo}. However, the conditions of these theorems are difficult to check since they involved elements of $E^*$ which have to be recursively computed.
The fact that, now, the simplified algorithms no longer involved these linear functionals will allow us to give new results, easier to verify in practice. Thus, the simplified $\varepsilon$--algorithms not only play a major role in the implementation of the topological Shanks transformations, but they also have a primary impact on the theoretical results that can be proved.

\vskip 2mm

Property \ref{prp} means that the numbers $e_k(S_n)$ obtained by applying the scalar Shanks transformation to the sequence of numbers $(S_n=<\mathbf y, \mathbf S_n>)$ and the elements $\mathbf {\widehat{e}}_k(\mathbf S_n)$
and $\mathbf {\widetilde{e}}_k(\mathbf S_n)$ of $E$, obtained by the topological Shanks transformations applied to the sequence $(\mathbf S_n)$ of elements of $E$, are computed by a linear combination with the same coefficients. This fact implies that, under some circumstances, the sequences $(e_k(S_n))$, $(\mathbf {\widehat{e}}_k(\mathbf S_n))$, and $(\mathbf {\widetilde{e}}_k(\mathbf S_n))$ can share similar convergence and acceleration behaviors, and we will now illustrate this fact.

\vskip 2mm

Let us begin by two results about important classes of sequences which, despite of their simplicity, remained unnoticed.

\begin{dfn}
A sequence of vectors in $\mathbb R^m$ or matrices in $\mathbb R^{m \times s}$ is {\em totally monotonic}, and we write $({\mathbf S_n}) \in \mbox{TM}$, if, for all $k$ and $n$, $(-1)^k \Delta^k {\mathbf S_n} \geq \mathbf 0$, where the inequality has to be understood for each component of the vectors or each element of the matrices. A sequence of vectors or matrices is {\em totally oscillating}, and we write $({\mathbf S_n}) \in \mbox{TO}$, if $((-1)^n {\mathbf S_n}) \in \mbox{TM}$.
\end{dfn}

In the scalar case, these sequences were first studied by Wynn \cite{pwconv} and his results were complemented by Brezinski \cite{etude,cbcr}. Their construction and their properties were studied in \cite{gene,pwcras}.

\vskip 2mm

Since the coefficients $a_i^{(n,k)}$ in (\ref{scTEA}) and (\ref{sTEA}) are the same, then, if $({\mathbf S_n}) \in \mbox{TM or TO}$ and if $(S_n=<\mathbf y,{\mathbf S_n}>) \in \mbox{TM or TO}$, then the scalars $\varepsilon_{2k}^{(n)}$, the components of the vectors or the elements of the matrices ${\boldsymbol{\widehat{\varepsilon}}}_{2k}^{(n)}$, and those of ${\boldsymbol{\widetilde\varepsilon}}_{2k}^{(n)}$ satisfy the same inequalities. Thus we have the following new results

\begin{theorem}
If $(\mathbf S_n)$ converges to $\mathbf S$, if $(S_n=<\mathbf y,{\mathbf S_n}>)
\in \mbox{TM}$, and if $\exists a \neq 0$ and ${\mathbf b} \in \mathbb R^m$
(or $\mathbb R^{m \times s}$ in the matrix case) such that
$(a\mathbf S_n+{\mathbf b}) \in \mbox{TM}$, then
$$
\begin{array}{ll}
\mathbf 0 \leq a
{\boldsymbol{\widehat{\varepsilon}}}_{2k+2}^{(n)}+{\mathbf b} \leq
a {\boldsymbol{\widehat{\varepsilon}}}_{2k}^{(n)}+{\mathbf b}, &
\mathbf 0 \leq a {\boldsymbol{\widehat{\varepsilon}}}_{2k}^{(n+1)}+
{\mathbf b} \leq a {\boldsymbol{\widehat{\varepsilon}}}_{2k}^{(n)}+{\mathbf b},\\
\mathbf 0 \leq a {\boldsymbol{\widehat{\varepsilon}}}_{2k+2}^{(n)}+{\mathbf b}
\leq a {\boldsymbol{\widehat{\varepsilon}}}_{2k}^{(n+1)}+{\mathbf b}, &
\mathbf 0 \leq a {\boldsymbol{\widehat{\varepsilon}}}_{2k+2}^{(n)}+
{\mathbf b} \leq a {\boldsymbol{\widehat{\varepsilon}}}_{2k}^{(n+2)}+{\mathbf b},\\
\displaystyle \forall k, \mbox{~fixed}, \quad \lim_{n \to \infty}
{\boldsymbol{\widehat{\varepsilon}}}_{2k}^{(n)}=\mathbf S, &
\displaystyle
\forall n, \mbox{~fixed}, \quad \lim_{k \to \infty}
{\boldsymbol{\widehat{\varepsilon}}}_{2k}^{(n)}=\mathbf S.
\end{array}
$$
 Moreover, if  $\lim_{n \to \infty} <\mathbf y, \mathbf
S_{n+1}-\mathbf S>/<\mathbf y, \mathbf S_{n}-\mathbf S> \neq 1$,
then
\begin{eqnarray*}
&&\forall k, ~ \mbox{fixed}, \quad \lim_{n \to \infty} \|{\boldsymbol{\widehat{\varepsilon}}}_{2k}^{(n)}-\mathbf S\|/\|\mathbf S_{n+2k}-\mathbf S\|=0\\
&&\forall n, ~ \mbox{fixed}, \quad \lim_{k \to \infty} \|{\boldsymbol{\widehat{\varepsilon}}}_{2k}^{(n)}-\mathbf S\|/\|\mathbf S_{n+2k}-\mathbf S\|=0.
\end{eqnarray*}
The same results hold for the ${\boldsymbol{\widetilde\varepsilon}}_{2k}^{(n)}$.
\end{theorem}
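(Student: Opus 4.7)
My plan is to exploit Property \ref{prp}: the coefficients $a_j^{(n,k)}$ are identical in the scalar and topological Shanks transformations. This allows the classical Wynn--Brezinski inequalities for scalar totally monotonic sequences to be transferred componentwise to the vector iterates. I will work with the rescaled quantities $\mathbf U_n := a\mathbf S_n + \mathbf b$ (componentwise TM by hypothesis, converging to $\mathbf U := a\mathbf S + \mathbf b$) and $\mathbf V_{2k}^{(n)} := a\boldsymbol{\widehat\varepsilon}_{2k}^{(n)} + \mathbf b$. Since system (\ref{sTEA}) has first equation $\sum_j a_j^{(n,k)} = 1$ and the remaining equations involve only the differences $\langle \mathbf y, \Delta\mathbf S_{n+j}\rangle$ (which rescale by $a$ when $\mathbf S_n$ is replaced by $\mathbf U_n$), the same coefficients solve the system for $(\mathbf S_n)$ and for $(\mathbf U_n)$, yielding $\mathbf V_{2k}^{(n)} = \sum_j a_j^{(n,k)} \mathbf U_{n+j}$. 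It therefore suffices to prove the four componentwise inequalities on $\mathbf V_{2k}^{(n)}$, the limit $\mathbf V_{2k}^{(n)} \to \mathbf U$ in $n$ and in $k$, and the componentwise acceleration rate.

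The scalar template is Wynn--Brezinski \cite{pwconv,etude,cbcr}: since $(S_n := \langle \mathbf y, \mathbf S_n\rangle) \in \mbox{TM}$ with limit $S$, one has $0 \leq \varepsilon_{2k+2}^{(n)} - S \leq \varepsilon_{2k}^{(n+2)} - S \leq \varepsilon_{2k}^{(n+1)} - S \leq \varepsilon_{2k}^{(n)} - S$, the convergence $\varepsilon_{2k}^{(n)} \to S$ in each of $n$ and $k$, and, under the ratio hypothesis, $(\varepsilon_{2k}^{(n)}-S)/(S_{n+2k}-S) \to 0$. To transport these to vectors, I use the Hausdorff moment representations $S_n = \int_0^1 x^n \,d\nu(x)$ and $U_n^{(i)} = \int_0^1 x^n \,d\mu_i(x)$ of the TM sequences, together with the polynomial $P_n^k(x) := \sum_j a_j^{(n,k)} x^j$, which depends only on $\nu$ through the orthogonality relations encoded in (\ref{scTEA}). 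The twin identities $\varepsilon_{2k}^{(n)} - S = \int_{[0,1)} x^n P_n^k(x)\,d\nu$ and $V_{2k}^{(n),(i)} - U^{(i)} = \int_{[0,1)} x^n P_n^k(x)\,d\mu_i$ then let me replay the scalar sign analysis against each $\mu_i$ in place of $\nu$, delivering the four componentwise inequalities. Componentwise monotone convergence then yields the limits $\mathbf V_{2k}^{(n)} \to \mathbf U$ (in both $n$ and $k$), pinned by the sandwich $\mathbf 0 \leq \mathbf V_{2k}^{(n)} - \mathbf U \leq \mathbf U_{n+2k} - \mathbf U$ combined with $\langle \mathbf y, \mathbf V_{2k}^{(n)}\rangle \to \langle \mathbf y, \mathbf U\rangle$. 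The acceleration follows from the same integral representation, the shared $P_n^k$ transferring the scalar rate uniformly across components. The statements for $\boldsymbol{\widetilde\varepsilon}_{2k}^{(n)}$ follow by the same program using the {\sc stea{\small 2}} formulas, which involve the same scalar auxiliary sequence.

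The main obstacle is the componentwise positivity $\mathbf 0 \leq \mathbf V_{2k+2}^{(n)}$: in the scalar case one uses Hankel determinant positivity for $(S_n)$ itself, but here $P_n^k$ is tuned to $\nu$ while it must be tested against the different componentwise measures $\mu_i$. The cleanest resolution is a sign analysis of $x^n P_n^k(x)$ on $[0,1)$ that makes the estimate measure-independent; if that is unavailable, a direct induction on $k$ based on the simplified rule (\ref{final22})---with inductive hypothesis that $(\mathbf V_{2k}^{(n)})_n$ is componentwise TM and input the scalar inequality $(\varepsilon_{2k+2}^{(n)}-\varepsilon_{2k}^{(n)})/(\varepsilon_{2k}^{(n+1)}-\varepsilon_{2k}^{(n)}) \geq 1$---closes the argument, the base case $k=0$ being the hypothesis on $(\mathbf U_n)$.
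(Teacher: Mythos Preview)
Your overall strategy---transferring the scalar Wynn--Brezinski TM inequalities to the vector iterates via the shared coefficients $a_j^{(n,k)}$ of Property~\ref{prp}---is exactly the argument the paper sketches in the sentence preceding the theorem. You go beyond the paper by correctly isolating the real difficulty: the polynomial $P_n^k$ is tuned to the scalar moment measure $\nu$ but must then be integrated against the unrelated component measures $\mu_i$.

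Neither of your two proposed resolutions closes this gap, and in fact the gap appears unclosable under the stated hypotheses. Take $m=2$, $\mathbf S_n=\bigl((1/2)^n,(1/3)^n\bigr)^T$, $a=1$, $\mathbf b=\mathbf 0$, and $\mathbf y=(1,\epsilon)^T$ with $\epsilon>0$ small, so that both $(\langle\mathbf y,\mathbf S_n\rangle)$ and $(a\mathbf S_n+\mathbf b)$ are componentwise TM and the scalar algorithm is non-degenerate. For $k=1$ the coefficients are $a_0\approx-1$, $a_1\approx 2$, so the second component of $\boldsymbol{\widehat\varepsilon}_2^{(0)}$ is approximately $-1+2/3=-1/3<0$, violating $\mathbf 0\le a\boldsymbol{\widehat\varepsilon}_2^{(n)}+\mathbf b$; a short computation shows the second component of $(\boldsymbol{\widehat\varepsilon}_2^{(n)})_n$ is increasing, so the monotonicity inequality $\mathbf V_{2k}^{(n+1)}\le \mathbf V_{2k}^{(n)}$ also fails. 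Regarding your Resolution~1: already for $k=1$ one has $a_0a_1<0$ in the TM regime, so $P_n^1(x)=a_0+a_1x$ changes sign on $(0,1)$ and no measure-independent sign argument exists. Regarding your Resolution~2: the scalar ratio $c_k^{(n)}=(\varepsilon_{2k+2}^{(n)}-\varepsilon_{2k}^{(n)})/(\varepsilon_{2k}^{(n+1)}-\varepsilon_{2k}^{(n)})$ is indeed $\ge 1$ but has no upper bound; in the example $c_0^{(n)}\approx 2$, and the update $\mathbf V_2^{(n)}=\mathbf V_0^{(n)}+c_0^{(n)}\Delta\mathbf V_0^{(n)}$ overshoots on the second component, so the inductive hypothesis ``$(\mathbf V_{2k}^{(n)})_n$ componentwise TM'' already fails at $k=1$. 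In short, the obstacle you flagged is genuine and your two proposed workarounds do not resolve it; as written, the lower bounds in the theorem need an additional hypothesis linking the component measures $\mu_i$ to $\nu$ (for instance $\mu_i\ll\nu$ with bounded density, or each component proportional to $S_n$), which the paper does not supply.
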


\begin{theorem}
If $(\mathbf S_n)$ converges to $\mathbf S$, if $(S_n=<\mathbf y,{\mathbf S_n}>) \in \mbox{TO}$, and if $\exists a \neq 0$ and ${\mathbf b} \in \mathbb R^m$ (or $\mathbb R^{m \times s}$ in the matrix case) such that
$(a\mathbf S_n+{\mathbf b}) \in \mbox{TO}$, then
$$
\begin{array}{ll}
\mathbf 0 \leq a {\boldsymbol{\widehat{\varepsilon}}}_{2k+2}^{(2n)}+
{\mathbf b} \leq a {\boldsymbol{\widehat{\varepsilon}}}_{2k}^{(2n)}+{\mathbf b}, &
a({\boldsymbol{\widehat{\varepsilon}}}_{2k}^{(2n+1)}-
{\boldsymbol{\widehat{\varepsilon}}}_{2k}^{(2n)}) \leq a
({\boldsymbol{\widehat{\varepsilon}}}_{2k+2}^{(2n+1)}-
{\boldsymbol{\widehat{\varepsilon}}}_{2k+2}^{(2n)}) \leq \mathbf 0,\\
a {\boldsymbol{\widehat{\varepsilon}}}_{2k}^{(2n+1)}+{\mathbf b}
\leq a {\boldsymbol{\widehat{\varepsilon}}}_{2k+2}^{(2n+1)}+{\mathbf b} \leq \mathbf 0, &
\mathbf 0 \leq a({\boldsymbol{\widehat{\varepsilon}}}_{2k+2}^{(2n+2)}-
{\boldsymbol{\widehat{\varepsilon}}}_{2k+2}^{(2n+1)}) \leq a
({\boldsymbol{\widehat{\varepsilon}}}_{2k}^{(2n+2)}-
{\boldsymbol{\widehat{\varepsilon}}}_{2k}^{(2n+1)}),
\\
 \mathbf 0 \leq a {\boldsymbol{\widehat{\varepsilon}}}_{2k+2}^{(2n)}+{\mathbf b} \leq a
 {\boldsymbol{\widehat{\varepsilon}}}_{2k}^{(2n+2)}+{\mathbf b}, &
 \displaystyle \forall k,  \mbox{~fixed}, \quad \lim_{n \to \infty}
{\boldsymbol{\widehat{\varepsilon}}}_{2k}^{(n)}=\mathbf S,
 \\
a {\boldsymbol{\widehat{\varepsilon}}}_{2k}^{(2n+3)}+{\mathbf b} \leq a
{\boldsymbol{\widehat{\varepsilon}}}_{2k+2}^{(2n+1)}+{\mathbf b} \leq \mathbf 0, &
\displaystyle \forall n,  \mbox{~fixed}, \quad \lim_{k \to \infty}
{\boldsymbol{\widehat{\varepsilon}}}_{2k}^{(n)}=\mathbf S.
\end{array}
$$
Moreover
\begin{eqnarray*}
&&\forall k, ~ \mbox{fixed}, \quad \lim_{n \to \infty} \|{\boldsymbol{\widehat{\varepsilon}}}_{2k}^{(n)}-\mathbf S\|/\|\mathbf S_{n+2k}-\mathbf S\|=0\\
&&\forall n, ~ \mbox{fixed}, \quad \lim_{k \to \infty} \|{\boldsymbol{\widehat{\varepsilon}}}_{2k}^{(n)}-\mathbf S\|/\|\mathbf S_{n+2k}-\mathbf S\|=0.
\end{eqnarray*}
The same results hold for the ${\boldsymbol{\widetilde\varepsilon}}_{2k}^{(n)}$.
\end{theorem}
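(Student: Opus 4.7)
The plan is to parallel the preceding \mbox{TM}-case theorem, transferring the known scalar Wynn--Brezinski results \cite{pwconv,etude,cbcr} on totally oscillating sequences to the topological iterates via Property \ref{prp} and the simplified rule (\ref{final22}).

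First I would establish the affine-combination identity
$$a\,\boldsymbol{\widehat{\varepsilon}}_{2k}^{(n)}+\mathbf{b}=\sum_{i=0}^{k} a_i^{(n,k)}(a\mathbf{S}_{n+i}+\mathbf{b}),$$
where the $a_i^{(n,k)}$ are the scalar Shanks coefficients of the system (\ref{scTEA}) applied to $(S_n=<\mathbf{y},\mathbf{S}_n>)$. This follows from the definition of $\mathbf{\widehat{e}}_k$, Property \ref{prp}, and $\sum a_i^{(n,k)}=1$; since these coefficients depend only on the differences $\Delta S_n$, they are unaffected by the affine shift $\mathbf{S}_n\mapsto a\mathbf{S}_n+\mathbf{b}$. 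The analogous identity holds for $\boldsymbol{\widetilde\varepsilon}_{2k}^{(n)}$ with $\mathbf{S}_{n+k+i}$ in place of $\mathbf{S}_{n+i}$.

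Next I would argue componentwise. Under the hypotheses, for each index $j$ the scalar sequence $(aS_n^{(j)}+b^{(j)})$ lies in \mbox{TO}, and so does $(<\mathbf{y},a\mathbf{S}_n+\mathbf{b}>)=aS_n+<\mathbf{y},\mathbf{b}>$. Wynn's scalar \mbox{TO} inequalities then furnish the desired sign relations on the iterates $\varepsilon_{2k}^{(n)}$, and in particular fix the sign, on each parity of $n$, of the scalar multiplier $\gamma_{n,k}=(\varepsilon_{2k+2}^{(n)}-\varepsilon_{2k}^{(n)})/(\varepsilon_{2k}^{(n+1)}-\varepsilon_{2k}^{(n)})$ that appears in the simplified rule (\ref{final22}). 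An induction on $k$, with base case $k=0$ supplied directly by the \mbox{TO} hypothesis on $(a\mathbf{S}_n+\mathbf{b})$, then propagates the stated inequalities componentwise from scalars to vectors/matrices by reusing Wynn's scalar argument, applied to each component of $a\mathbf{S}_n+\mathbf{b}$ with the scalar multiplier $\gamma_{n,k}$ whose sign has already been pinned down.

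Convergence and acceleration then follow as corollaries. The sandwich structure, separated into even and odd indices in accordance with the \mbox{TO} hypothesis, combined with $\mathbf{S}_n\to\mathbf{S}$, gives $\boldsymbol{\widehat{\varepsilon}}_{2k}^{(n)}\to\mathbf{S}$ both for fixed $k$ as $n\to\infty$ and, by a monotone/diagonal argument on the bounded components, for fixed $n$ as $k\to\infty$. For the acceleration, in contrast with the \mbox{TM} case, the \mbox{TO} hypothesis automatically forces $\lim <\mathbf{y},\mathbf{S}_{n+1}-\mathbf{S}>/<\mathbf{y},\mathbf{S}_n-\mathbf{S}>$ to be nonpositive when it exists, hence different from $1$, so Wynn's scalar acceleration theorem applies without any additional ratio hypothesis; transferring it through the affine identity yields $\|\boldsymbol{\widehat{\varepsilon}}_{2k}^{(n)}-\mathbf{S}\|/\|\mathbf{S}_{n+2k}-\mathbf{S}\|\to 0$. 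The same reasoning applies verbatim to $\boldsymbol{\widetilde\varepsilon}_{2k}^{(n)}$ using the {\sc stea{\small 2}} formulas of Section \ref{STEAsection}.

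The main obstacle is sign bookkeeping: unlike the \mbox{TM} case, \mbox{TO} inequalities alternate with the parity of $n$, so one must check that $\gamma_{n,k}$ carries the correct sign on \emph{both} parities to preserve simultaneously the orderings between consecutive-$k$ iterates and between consecutive-$n$ iterates. This is exactly where the joint hypothesis ``$(<\mathbf{y},\mathbf{S}_n>)\in\mbox{TO}$ and $(a\mathbf{S}_n+\mathbf{b})\in\mbox{TO}$'' is used: the first controls the sign pattern of the scalar $\gamma_{n,k}$, while the second supplies the induction base case and the componentwise interpretation of the inequalities in the vector/matrix setting.
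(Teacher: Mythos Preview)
Your proposal is correct and follows the same line as the paper, which in fact gives no standalone proof of this theorem: the entire argument in the paper is the single sentence preceding the two TM/TO theorems, namely that since the coefficients $a_i^{(n,k)}$ in (\ref{scTEA}) and (\ref{sTEA}) coincide, the components of $a\boldsymbol{\widehat{\varepsilon}}_{2k}^{(n)}+\mathbf{b}$ inherit the scalar Wynn--Brezinski TO inequalities. Your affine-combination identity and the componentwise transfer are exactly this observation; your additional layer---tracking the sign of $\gamma_{n,k}$ through the simplified rule (\ref{final22}) and inducting on $k$---is a more explicit mechanism for the same transfer, not a different idea, and it fills in detail the paper leaves to the cited scalar references \cite{pwconv,etude,cbcr}.
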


\begin{rem}
If the vectors ${\mathbf S_n}$ belong to TM or TO and if $\mathbf y \geq \mathbf 0$, then $(S_n=<\mathbf y,{\mathbf S_n}>) \in \mbox{TM or TO}$. If the matrices ${\mathbf S_n}$ belong to TM and if $\mathbf y$ is the linear form such that, for any matrix $ \mathbf M$, $<\mathbf y, \mathbf M>=$trace$(\mathbf M)$, then $(S_n=<\mathbf y,{\mathbf S_n}>) \in \mbox{TM or TO}$.

As noticed in \cite{midy}, if, for all $n$, $\mathbf S_n \in \mathbb R^m$ is replaced by $ \mathbf D \mathbf S_n$, where $ \mathbf D$ is a diagonal regular matrix of dimension $m$, and if $\mathbf y$ is replaced by
$\lambda  \mathbf D^{-1} \mathbf y$, where $\lambda$ is a nonzero scalar, then, for all $k$ and $n$,
$\boldsymbol{\widehat{\varepsilon}}_{2k}^{(n)}$ becomes $ \mathbf D\boldsymbol{\widehat{\varepsilon}}_{2k}^{(n)}$ and
$\boldsymbol{\widehat{\varepsilon}}_{2k+1}^{(n)}$ becomes $ \mathbf D^{-1}\boldsymbol{\widehat{\varepsilon}}_{2k+1}^{(n)}$.
The same property holds for the $\boldsymbol{\widetilde{\varepsilon}}_{k}^{(n)}$'s.This remark allows to extend the two previous theorems.
\end{rem}

Let us continue by some other convergence and acceleration results that would not have been easily obtained directly from the determinantal formulae of the topological Shanks transformations or from the rules of the topological $\varepsilon$--algorithms.

\vskip 2mm

Assume now that $E$ is a normed vector space and that ${\mathbf y} \in E'\subseteq E^*$, where
 $E'$ is the space of all continuous linear functionals on $E$. We will only consider the case of the first transformation and the first algorithm, the second ones could be treated similarly and led to equivalent results.

\vskip 2mm

Let us set
\begin{equation}
\label{eq11}
r_k^{(n)}=(\varepsilon^{(n)}_{2k+2}  - \varepsilon^{(n+1)}_{2k})/(\varepsilon^{(n+1)}_{2k}  - \varepsilon^{(n)}_{2k}).
\end{equation}
The rule (\ref{final2}) of the first simplified topological $\varepsilon$--algorithm gives
$$\left\|\boldsymbol{\widehat{\varepsilon}}_{2k+2}^{(n)} -  \boldsymbol{\widehat{\varepsilon}}_{2k}^{(n+1)}\right\|/
\left\|{\boldsymbol{\widehat{\varepsilon}}}_{2k}^{(n+1)}-{\boldsymbol{\widehat{\varepsilon}}}_{2k}^{(n)}\right\|=|r_k^{(n)}|,$$
which shows that both ratios have the same form and behave similarly. We have the following convergence result which, in fact, is nothing else than Toeplitz theorem for a summation process

\begin{theorem}
\label{thm1}
If $\lim_{n \to \infty}{\boldsymbol{\widehat{\varepsilon}}}_{2k}^{(n)}=\bf S$, and if $\exists M$ such that,
$\forall n \geq N$, $|r_k^{(n)}| \leq M$, then
$\lim_{n \to \infty}{\boldsymbol{\widehat{\varepsilon}}}_{2k+2}^{(n)}=\bf S$.
\end{theorem}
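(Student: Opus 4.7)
The plan is to read off the result directly from the compact form (\ref{final2}) of the {\sc stea{\small 1}} rule, rewritten as
$$\boldsymbol{\widehat{\varepsilon}}_{2k+2}^{(n)} - \boldsymbol{\widehat{\varepsilon}}_{2k}^{(n+1)} = r_k^{(n)}\bigl(\boldsymbol{\widehat{\varepsilon}}_{2k}^{(n+1)} - \boldsymbol{\widehat{\varepsilon}}_{2k}^{(n)}\bigr),$$
where $r_k^{(n)}$ is exactly the scalar defined in (\ref{eq11}). The whole argument is then a single application of the triangle inequality together with the hypothesis that these scalar multipliers stay bounded.

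Concretely, I would proceed as follows. First, since by hypothesis $\boldsymbol{\widehat{\varepsilon}}_{2k}^{(n)} \to \mathbf{S}$ in the norm of $E$, the shifted sequence $\boldsymbol{\widehat{\varepsilon}}_{2k}^{(n+1)}$ also tends to $\mathbf{S}$, and consecutive differences satisfy $\|\boldsymbol{\widehat{\varepsilon}}_{2k}^{(n+1)} - \boldsymbol{\widehat{\varepsilon}}_{2k}^{(n)}\| \to 0$ as $n \to \infty$. Next, using the displayed identity above and taking norms,
$$\bigl\|\boldsymbol{\widehat{\varepsilon}}_{2k+2}^{(n)} - \boldsymbol{\widehat{\varepsilon}}_{2k}^{(n+1)}\bigr\| = |r_k^{(n)}|\,\bigl\|\boldsymbol{\widehat{\varepsilon}}_{2k}^{(n+1)} - \boldsymbol{\widehat{\varepsilon}}_{2k}^{(n)}\bigr\| \leq M\,\bigl\|\boldsymbol{\widehat{\varepsilon}}_{2k}^{(n+1)} - \boldsymbol{\widehat{\varepsilon}}_{2k}^{(n)}\bigr\|$$
for every $n \geq N$. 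The right-hand side tends to $0$, so $\boldsymbol{\widehat{\varepsilon}}_{2k+2}^{(n)} - \boldsymbol{\widehat{\varepsilon}}_{2k}^{(n+1)} \to \mathbf{0}$. Combining this with $\boldsymbol{\widehat{\varepsilon}}_{2k}^{(n+1)} \to \mathbf{S}$ yields $\boldsymbol{\widehat{\varepsilon}}_{2k+2}^{(n)} \to \mathbf{S}$, which is the claim.

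There is essentially no obstacle in this argument: the content of the theorem is that the bounded-multiplier hypothesis on $r_k^{(n)}$ turns the one-step update of {\sc stea{\small 1}} into a Toeplitz-type summability operation that preserves limits, and everything reduces to an estimate of a single product of a bounded scalar by a vanishing vector. The only reason the statement is nontrivial at all is that such a clean one-line argument was not available directly from the rhombus rules (\ref{etop}), where the analogous quantity appears entangled with the dual elements $\boldsymbol{\widehat{\varepsilon}}_{2k+1}^{(n)} \in E^*$; the simplification (\ref{final2}) is precisely what makes the bound on $|r_k^{(n)}|$ a natural and checkable hypothesis.
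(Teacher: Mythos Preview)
Your argument is correct and is exactly the approach the paper takes: it records the identity $\|\boldsymbol{\widehat{\varepsilon}}_{2k+2}^{(n)} - \boldsymbol{\widehat{\varepsilon}}_{2k}^{(n+1)}\|/\|\boldsymbol{\widehat{\varepsilon}}_{2k}^{(n+1)} - \boldsymbol{\widehat{\varepsilon}}_{2k}^{(n)}\| = |r_k^{(n)}|$ immediately before the theorem and then simply remarks that the result ``is nothing else than Toeplitz theorem for a summation process,'' without writing out any further details. Your proof just makes this one-line invocation explicit.
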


Thus, the convergence of the first simplified topological $\varepsilon$--algorithm depends on the behavior of the scalar one,
and the choice of $\mathbf y$ intervenes in the conditions to be satisfied. The other forms of the first simplified $\varepsilon$--algorithm lead to different ratios linking its behavior with that of the scalar $\varepsilon$--algorithm.

\vskip 2mm

Let us now give an acceleration result in the case where $E=\mathbb R^m$, $m>1$,

\begin{theorem}
\label{thm0}
Assume that $E=\mathbb R^m$, that $r_k^{(n)}=r_k+o(1)$ with $r_k \neq -1$, and that $\forall i,  ({\boldsymbol{\widehat{\varepsilon}}}_{2k}^{(n+1)}-\mathbf S)_i/({\boldsymbol{\widehat{\varepsilon}}}_{2k}^{(n)}-\mathbf S)_i=r_k/(1+r_k)+o(1)$, then $\lim_{n \to \infty} \|{\boldsymbol{\widehat{\varepsilon}}}_{2k+2}^{(n)}-\mathbf S\|
/\|{\boldsymbol{\widehat{\varepsilon}}}_{2k}^{(n)}-\mathbf S\|=0$.
\end{theorem}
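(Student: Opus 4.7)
My plan is to work directly from the rule (\ref{final22}), which expresses the new iterate additively as a perturbation of ${\boldsymbol{\widehat{\varepsilon}}}_{2k}^{(n)}$. The key algebraic observation is that the coefficient appearing in (\ref{final22}) is nothing but $1+r_k^{(n)}$, because
\[
\frac{\varepsilon^{(n)}_{2k+2}-\varepsilon^{(n)}_{2k}}{\varepsilon^{(n+1)}_{2k}-\varepsilon^{(n)}_{2k}}
=\frac{\varepsilon^{(n)}_{2k+2}-\varepsilon^{(n+1)}_{2k}}{\varepsilon^{(n+1)}_{2k}-\varepsilon^{(n)}_{2k}}+1
=r_k^{(n)}+1,
\]
using the definition (\ref{eq11}). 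So (\ref{final22}) becomes
\[
{\boldsymbol{\widehat{\varepsilon}}}_{2k+2}^{(n)}={\boldsymbol{\widehat{\varepsilon}}}_{2k}^{(n)}+(1+r_k^{(n)})\bigl({\boldsymbol{\widehat{\varepsilon}}}_{2k}^{(n+1)}-{\boldsymbol{\widehat{\varepsilon}}}_{2k}^{(n)}\bigr).
\]

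Next, I would subtract $\mathbf S$ from both sides and rewrite the right-hand side in terms of the errors. Writing $\mathbf e^{(n)}={\boldsymbol{\widehat{\varepsilon}}}_{2k}^{(n)}-\mathbf S$ and $\mathbf E^{(n)}={\boldsymbol{\widehat{\varepsilon}}}_{2k+2}^{(n)}-\mathbf S$, a direct regrouping gives
\[
\mathbf E^{(n)}=-r_k^{(n)}\,\mathbf e^{(n)}+(1+r_k^{(n)})\,\mathbf e^{(n+1)}.
\]
Passing to components (valid because $E=\mathbb R^m$), and dividing by $(\mathbf e^{(n)})_i$, which is nonzero for $n$ large enough by the second hypothesis, we get
\[
\frac{(\mathbf E^{(n)})_i}{(\mathbf e^{(n)})_i}=-r_k^{(n)}+(1+r_k^{(n)})\,\frac{(\mathbf e^{(n+1)})_i}{(\mathbf e^{(n)})_i}
=: \delta_i^{(n)}.
\]
The assumptions $r_k^{(n)}=r_k+o(1)$ and $(\mathbf e^{(n+1)})_i/(\mathbf e^{(n)})_i=r_k/(1+r_k)+o(1)$ (where $1+r_k\neq 0$ by hypothesis, which justifies the fraction) yield, componentwise,
\[
\delta_i^{(n)}=-r_k+(1+r_k)\cdot\frac{r_k}{1+r_k}+o(1)=o(1),\qquad i=1,\ldots,m.
\]

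Finally, I would conclude the theorem by passing to norms. Choose the $\infty$-norm on $\mathbb R^m$ (all norms are equivalent, so the conclusion is norm-independent); then
\[
\|\mathbf E^{(n)}\|_\infty=\max_{i}|\delta_i^{(n)}|\,|(\mathbf e^{(n)})_i|\leq\Bigl(\max_i|\delta_i^{(n)}|\Bigr)\|\mathbf e^{(n)}\|_\infty,
\]
and since $m$ is finite and each $\delta_i^{(n)}\to 0$, the max also tends to $0$. This gives $\|\mathbf E^{(n)}\|/\|\mathbf e^{(n)}\|\to 0$, as required.

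The only genuinely delicate point is the cancellation $-r_k+(1+r_k)\cdot r_k/(1+r_k)=0$ that makes $\delta_i^{(n)}$ an $o(1)$ rather than a nonzero constant; this is precisely what the hypothesis on the componentwise ratio is engineered to produce, and it is why the result needs the exact limiting value $r_k/(1+r_k)$ rather than an arbitrary one. No deeper obstacle arises: the finite-dimensionality of $E$ lets one move freely between componentwise $o(1)$ statements and norm estimates.
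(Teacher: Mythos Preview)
Your proof is correct and follows essentially the same route as the paper's: both rewrite the recursion as ${\boldsymbol{\widehat{\varepsilon}}}_{2k+2}^{(n)}-\mathbf S=(1+r_k^{(n)})({\boldsymbol{\widehat{\varepsilon}}}_{2k}^{(n+1)}-\mathbf S)-r_k^{(n)}({\boldsymbol{\widehat{\varepsilon}}}_{2k}^{(n)}-\mathbf S)$, substitute the asymptotic hypotheses componentwise to obtain an $o(1)$ factor, and conclude via equivalence of norms on $\mathbb R^m$. The only cosmetic difference is that you reach this identity starting from (\ref{final22}) and explicitly identifying the coefficient as $1+r_k^{(n)}$, whereas the paper starts from (\ref{final2})/(\ref{final21}); since the paper notes these forms are equivalent, the arguments are the same.
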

\begin{proof}
Writing  (\ref{final2}) as (which is the same as (\ref{final21}))
$${\boldsymbol{\widehat{\varepsilon}}}_{2k+2}^{(n)}-\mathbf S=(1+r_k^{(n)})({\boldsymbol{\widehat{\varepsilon}}}_{2k}^{(n+1)}-\mathbf S)-r_k^{(n)}
({\boldsymbol{\widehat{\varepsilon}}}_{2k}^{(n)}-\mathbf S),$$ we have
from the assumptions on $r_k^{(n)}$ and on the components of the vectors
$$({\boldsymbol{\widehat{\varepsilon}}}_{2k+2}^{(n)}-\mathbf S)_i=[(1+r_k+o(1))(r_k/(1+r_k)+o(1))-(r_k+o(1))]
({\boldsymbol{\widehat{\varepsilon}}}_{2k}^{(n)}-\mathbf S)_i=o(1)({\boldsymbol{\widehat{\varepsilon}}}_{2k}^{(n)}-\mathbf S)_i.$$
Thus the result since all norms are equivalent.
\end{proof}

\begin{rem}
By the assumptions on the components of the vectors, it holds
$\lim_{n \to \infty}(\varepsilon^{(n+1)}_{2k}-S)/(\varepsilon^{(n)}_{2k}-S)=r_k/(1+r_k).$ Since $r_k^{(n)}=r_k+o(1)$ and
$$r_k^{(n)}=
\left(\displaystyle
\frac{\varepsilon^{(n)}_{2k+2}-S}{\varepsilon^{(n)}_{2k}-S}
- \frac{\varepsilon^{(n+1)}_{2k}-S}{\varepsilon^{(n)}_{2k}-S}\right)\Big/
\left({ \displaystyle
\frac{\varepsilon^{(n+1)}_{2k}-S} {\varepsilon^{(n)}_{2k}-S}-1}\right),
$$
it follows that we also have $\lim_{n \to \infty}(\varepsilon^{(n)}_{2k+2}-S)/(\varepsilon^{(n)}_{2k}-S)=0$.
\end{rem}

\vskip 2mm

Let us now give convergence and acceleration results concerning four special types of sequences. These sequences were considered by Wynn in the scalar case and for the scalar $\varepsilon$--algorithm \cite{pwconv}. We will see that his results can be extended to similar sequences of elements of $E$. The first result is the following

\begin{theorem}
\label{thm2}
We consider sequences of the form
$$\mbox{a)~} \mathbf{S_n-S}  \sim \sum_{i=1}^\infty a_i\lambda_i^n \mathbf{u_i},\quad (n \to \infty)
\mbox{~~or~~} \mbox{b)~} \mathbf{S_n-S}  \sim (-1)^n \sum_{i=1}^\infty a_i\lambda_i^n \mathbf{u_i},
\quad (n \to \infty),
$$
where $a_i, \lambda_i \in \mathbb K$, $\mathbf{u_i} \in E$, and $1>\lambda_1>\lambda_2>\cdots>0$. Then, when $k$ is fixed and $n$ tends to infinity,
$$
\begin{array}{ll}
\displaystyle
{\boldsymbol{\widehat{\varepsilon}}}_{2k}^{(n)}-\mathbf S={\cal O}(\lambda_{k+1}^n), & \quad
\displaystyle \frac{\|{\boldsymbol{\widehat{\varepsilon}}}_{2k+2}^{(n)}-\mathbf S\|}
{\|{\boldsymbol{\widehat{\varepsilon}}}_{2k}^{(n)}-\mathbf S\|}={\cal O}((\lambda_{k+2}/\lambda_{k+1})^n).
\end{array}
$$
\end{theorem}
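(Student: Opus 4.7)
The plan is to exploit Property \ref{prp} to reduce both estimates to Wynn's classical scalar analysis. Since $\sum_j a_j^{(n,k)} = 1$ by the first equation of (\ref{sTEA}), we may write
$$\boldsymbol{\widehat{\varepsilon}}_{2k}^{(n)} - \mathbf S \;=\; \sum_{j=0}^{k} a_j^{(n,k)}(\mathbf S_{n+j} - \mathbf S).$$
By Property \ref{prp}, the $a_j^{(n,k)}$ are precisely the scalar Shanks coefficients at index $(n,k)$ for the sequence $(S_n := \langle \mathbf y, \mathbf S_n\rangle)$, which inherits the form (a) with scalar coefficients $\alpha_i := a_i\langle \mathbf y, \mathbf u_i\rangle$. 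Substituting the given asymptotic expansion of $\mathbf S_{n+j} - \mathbf S$ into the sum yields, in case (a),
$$\boldsymbol{\widehat{\varepsilon}}_{2k}^{(n)} - \mathbf S \;\sim\; \sum_{i=1}^{\infty} a_i\, \lambda_i^{n}\, P_k^{(n)}(\lambda_i)\, \mathbf u_i, \qquad P_k^{(n)}(\lambda) := \sum_{j=0}^{k} a_j^{(n,k)} \lambda^{j},$$
with $P_k^{(n)}(1) = 1$.

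Wynn's analysis of the scalar $\varepsilon$-algorithm in \cite{pwconv} shows that, for sequences of type (a), $P_k^{(n)}$ converges as $n \to \infty$ to $P_k(\lambda) := \prod_{i=1}^{k}(\lambda - \lambda_i)/(1-\lambda_i)$ at a geometric rate, with $P_k^{(n)}(\lambda_i) = \mathcal O((\lambda_{k+1}/\lambda_i)^n)$ for $i \leq k$ and $P_k^{(n)}(\lambda_i) \to P_k(\lambda_i) \neq 0$ for $i \geq k+1$. Inserted term by term in the series above, the contributions $i \leq k$ are each $\mathcal O(\lambda_{k+1}^n)$ and the tail $i \geq k+1$ is absolutely dominated by a constant multiple of $\lambda_{k+1}^n$; hence
$$\boldsymbol{\widehat{\varepsilon}}_{2k}^{(n)} - \mathbf S \;=\; \lambda_{k+1}^n\bigl(a_{k+1} P_k(\lambda_{k+1})\, \mathbf u_{k+1} + o(1)\bigr) \;=\; \mathcal O(\lambda_{k+1}^n),$$
which is the first estimate. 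Applying the same identity at level $k{+}1$ gives an analogous expression whose leading term is $a_{k+2} P_{k+1}(\lambda_{k+2}) \lambda_{k+2}^n \mathbf u_{k+2}$, and the quotient of norms is therefore $\mathcal O((\lambda_{k+2}/\lambda_{k+1})^n)$ (using the equivalence of norms on the finite-dimensional span of the relevant $\mathbf u_i$). Case (b) is handled verbatim after replacing $\lambda_i$ by $-\lambda_i$ in the expansion: the same polynomial $P_k^{(n)}$ appears and Wynn's scalar estimate applies directly to $((-1)^n S_n)$.

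The only non-routine input is the quantitative rate $P_k^{(n)}(\lambda_i) = \mathcal O((\lambda_{k+1}/\lambda_i)^n)$ for $i \leq k$, equivalently the geometric convergence $P_k^{(n)} \to P_k$. This is classical: applying Cramer's rule to system (\ref{scTEA}) and factoring the dominant powers $\lambda_1^n,\ldots,\lambda_k^n$ out of rows in both the numerator and denominator determinants reduces the question to a Vandermonde quotient whose limit can be written down explicitly. I would simply cite Wynn \cite{pwconv} for this scalar fact; once it is in hand the vector statement drops out of Property \ref{prp} with essentially no further work, which is a good illustration of the main virtue of the simplified algorithms emphasized at the start of Section \ref{conacc}, namely that scalar convergence theorems transfer directly to the topological setting.
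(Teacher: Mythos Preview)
Your approach is correct in outline but genuinely different from the paper's. The paper argues by induction on $k$ through the recursive rule (\ref{final2}) of the simplified algorithm: it first extracts the asymptotics of $r_k^{(n)}$ from Wynn's scalar estimate for $\varepsilon_{2k}^{(n)}$, assumes an expansion of the form $\boldsymbol{\widehat{\varepsilon}}_{2k}^{(n)}-\mathbf S \sim \sum_{i\geq k+1}\alpha_{k,i}\lambda_i^n\mathbf u_i$, and plugs both into (\ref{final2}) to propagate the expansion to level $k{+}1$. You bypass the recursion entirely, going back to the definition $\boldsymbol{\widehat{\varepsilon}}_{2k}^{(n)}-\mathbf S=\sum_j a_j^{(n,k)}(\mathbf S_{n+j}-\mathbf S)$ and invoking Property~\ref{prp} to borrow the asymptotics of the polynomial $P_k^{(n)}$ directly from the scalar theory. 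Your route is arguably more elementary and makes the role of Property~\ref{prp} very transparent; the paper's route, on the other hand, is the one that actually showcases the new simplified recursion, which is after all the paper's selling point in Section~\ref{conacc}.

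One correction to your write--up: the displayed identification of the leading term as $a_{k+1}P_k(\lambda_{k+1})\,\mathbf u_{k+1}$ alone is not right. For $i\leq k$ you have $P_k^{(n)}(\lambda_i)\sim c_i(\lambda_{k+1}/\lambda_i)^n$ with $c_i\neq 0$ in general, so the terms $a_i\lambda_i^nP_k^{(n)}(\lambda_i)\,\mathbf u_i$ each contribute at order $\lambda_{k+1}^n$ as well; the leading coefficient is therefore a combination of $\mathbf u_1,\ldots,\mathbf u_{k+1}$, not just $\mathbf u_{k+1}$. (This is precisely why Wynn's scalar formula, quoted in the paper, carries the \emph{squared} product $\prod_{i\leq k}(\lambda_{k+1}-\lambda_i)^2/(1-\lambda_i)^2$ rather than $P_k(\lambda_{k+1})$: the extra factor comes from the $i\leq k$ contributions after projection by $\mathbf y$.) This does not affect the ${\cal O}(\lambda_{k+1}^n)$ conclusion, but your lower bound for the denominator in the ratio estimate should be justified from this corrected leading term, under the natural nondegeneracy assumption that the combination does not vanish. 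The paper's inductive formulation sidesteps this bookkeeping by absorbing all such contributions into the coefficients~$\alpha_{k,i}$.
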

\begin{proof}
We consider sequences of the form \mbox{\it a)}. We have
$$<\mathbf y,\mathbf S_n>-<\mathbf y,\mathbf S> \sim \sum_{i=1}^\infty a_i\lambda_i^n <\mathbf y,\mathbf{u_i}>.$$
If the scalar $\varepsilon$--algorithm is applied to the sequence $(<\mathbf y,\mathbf S_n>)$,
Wynn \cite{pwconv} proved that
$$\varepsilon_{2k}^{(n)}-<\mathbf y,\mathbf S> = a_{k+1}\frac
{(\lambda_{k+1}-\lambda_{1})^2 \cdots (\lambda_{k+1}-\lambda_{k})^2}
{(1-\lambda_{1})^2 \cdots (1-\lambda_{k})^2}\lambda_{k+1}^n <\mathbf y,\mathbf{u_{k+1}}>+{\cal O}(\lambda_{k+2}^n).$$
After some algebraic manipulations using (\ref{eq11}), we get
\begin{eqnarray*}
r_k^{(n)} & \! \sim \! & \frac{\lambda_{k+1}}{1-\lambda_{k+1}}\!+\!\frac{a_{k+2} <\mathbf y,\mathbf u_{k+2}>(\lambda_{k+2}-\lambda_{1})^2
\cdots (\lambda_{k+2}-\lambda_{k+1})^2}
{a_{k+1} <\mathbf y,\mathbf u_{k+1}>(\lambda_{k+1}-\lambda_{1})^2
\cdots (\lambda_{k+1}-\lambda_{k})^2(1-\lambda_{k+1})^3}\!\!\left(\frac{\lambda_{k+2}}{\lambda_{k+1}}\right)^{\!\!n} \\
&& \mbox{~~~} +
o\left(\frac{\lambda_{k+2}}{\lambda_{k+1}}\right)^{\!\!n}\!\!.
\end{eqnarray*}
Assume that
$${\boldsymbol{\widehat{\varepsilon}}}_{2k}^{(n)}-\mathbf S \sim \sum_{i=k+1}^p \alpha_{k,i} \lambda_i^n \mathbf u_i,$$
where the coefficients $\alpha_{k,i}$ depend on $k$ and $i$ but not on $n$
(by assumption, this is true for $k=0$).

Plugging this relation into the rule (\ref{final2}) of the first simplified topological $\varepsilon$--algorithm, and using the preceding expression for $r_k^{(n)}$, we obtain (see \cite{epve}, for a similar proof) that
$${\boldsymbol{\widehat{\varepsilon}}}_{2k+2}^{(n)}-\mathbf S \sim \frac{1}{1-\lambda_{k+1}}\left(\sum_{i=k+1}^p \alpha_{k,i} \lambda_i^{n+1} \mathbf u_i-\lambda_{k+1}\sum_{i=k+1}^p \alpha_{k,i} \lambda_i^{n} \mathbf u_i\right),$$
which proves the first result by induction. The second result follows immediately.

\vskip 1mm

\noindent The proof for sequences of the form \mbox{\it b)} is similar
to the preceding case by replacing each $\lambda_i$ by $-\lambda_i$.
\end{proof}

An application of this result will be given in Section \ref{linit}.

\vskip 1mm

The next theorem, whose proof is obtained by expressing $r_k^{(n)}$  as above from the result obtained by Wynn in the scalar case \cite{pwconv}, is
\begin{theorem}
\label{thm4}
We consider sequences of the form
$$\mathbf{S_n-S}  \sim \sum_{i=1}^\infty a_i (n+b)^{-i}  \mathbf{u_i},\quad (n \to \infty),$$
where $a_i, b \in \mathbb K$, $\mathbf{u_i} \in E$. Then, when $k$ is fixed and $n$ tends to infinity,
$${\boldsymbol{\widehat{\varepsilon}}}_{2k}^{(n)}-\mathbf S \sim \frac{a_1 \mathbf{u_1}}{(k+1)(n+b)}.$$
\end{theorem}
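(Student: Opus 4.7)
The plan mirrors the scheme advertised in the text just above: extract the asymptotic size of $r_k^{(n)}$ from Wynn's scalar result, then feed it into the recurrence (\ref{final2}) rewritten as
$$\boldsymbol{\widehat{\varepsilon}}_{2k+2}^{(n)} - \mathbf S = (\boldsymbol{\widehat{\varepsilon}}_{2k}^{(n+1)} - \mathbf S) + r_k^{(n)}\bigl(\boldsymbol{\widehat{\varepsilon}}_{2k}^{(n+1)} - \boldsymbol{\widehat{\varepsilon}}_{2k}^{(n)}\bigr).$$
Setting $S_n=\langle\mathbf y,\mathbf S_n\rangle$, the scalar sequence inherits the same logarithmic expansion $S_n-S\sim\sum_{i\ge 1}a_i\langle\mathbf y,\mathbf u_i\rangle/(n+b)^i$, and the classical scalar result of Wynn \cite{pwconv} (the scalar version of the statement to be proved) gives $\varepsilon_{2k}^{(n)}-S\sim a_1\langle\mathbf y,\mathbf u_1\rangle/[(k+1)(n+b)]$ as $n\to\infty$.

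From here I would compute $r_k^{(n)}$ directly from (\ref{eq11}). With $\beta=a_1\langle\mathbf y,\mathbf u_1\rangle$ and the expansion $1/(n+1+b)=1/(n+b)-1/(n+b)^2+O(1/(n+b)^3)$, a straightforward calculation in both the numerator and the denominator produces
$$\varepsilon_{2k+2}^{(n)}-\varepsilon_{2k}^{(n+1)}\sim -\frac{\beta}{(k+1)(k+2)(n+b)},\qquad \varepsilon_{2k}^{(n+1)}-\varepsilon_{2k}^{(n)}\sim -\frac{\beta}{(k+1)(n+b)^2},$$
so that $r_k^{(n)}\sim (n+b)/(k+2)$. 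Note that, in contrast with Theorem \ref{thm1} and Theorem \ref{thm0}, this ratio is unbounded, so the argument cannot be packaged as a Toeplitz-type statement and must instead exploit the precise cancellation described below.

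The remaining step is an induction on $k$. The base case $k=0$ is trivial since $\boldsymbol{\widehat{\varepsilon}}_0^{(n)}-\mathbf S=\mathbf S_n-\mathbf S\sim a_1\mathbf u_1/(n+b)$. For the inductive step, assuming that $\boldsymbol{\widehat{\varepsilon}}_{2k}^{(n)}-\mathbf S\sim a_1\mathbf u_1/[(k+1)(n+b)]$ with enough regularity that $\boldsymbol{\widehat{\varepsilon}}_{2k}^{(n+1)}-\boldsymbol{\widehat{\varepsilon}}_{2k}^{(n)}\sim -a_1\mathbf u_1/[(k+1)(n+b)^2]$, the displayed recurrence yields
$$\boldsymbol{\widehat{\varepsilon}}_{2k+2}^{(n)}-\mathbf S\sim\frac{a_1\mathbf u_1}{(k+1)(n+b)}-\frac{n+b}{k+2}\cdot\frac{a_1\mathbf u_1}{(k+1)(n+b)^2}=\frac{a_1\mathbf u_1}{(n+b)}\Bigl(\frac{1}{k+1}-\frac{1}{(k+1)(k+2)}\Bigr)=\frac{a_1\mathbf u_1}{(k+2)(n+b)},$$
which is the asserted formula at level $k+1$.

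The principal obstacle is that a bare leading-order equivalent $\sim a_1\mathbf u_1/[(k+1)(n+b)]$ is too weak to control the difference $\boldsymbol{\widehat{\varepsilon}}_{2k}^{(n+1)}-\boldsymbol{\widehat{\varepsilon}}_{2k}^{(n)}$, which is one power smaller than the leading term. The induction must therefore carry a structural asymptotic expansion in the style of the proof of Theorem \ref{thm2}, for instance $\boldsymbol{\widehat{\varepsilon}}_{2k}^{(n)}-\mathbf S\sim\sum_{i\ge 1}\boldsymbol{\alpha}_{k,i}/(n+b)^i$ with $\boldsymbol{\alpha}_{k,1}=a_1\mathbf u_1/(k+1)$, and one must verify that (\ref{final2}) propagates this form while reproducing the asserted leading vector coefficient at each new level. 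Once this bookkeeping is installed, the cancellation above closes the induction, and the final result follows.
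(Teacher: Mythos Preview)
Your proposal is correct and follows precisely the route indicated in the paper, which merely states that the proof ``is obtained by expressing $r_k^{(n)}$ as above from the result obtained by Wynn in the scalar case''; you have filled in exactly that computation, including the key observation that $r_k^{(n)}\sim (n+b)/(k+2)$ and the resulting cancellation in the recurrence~(\ref{final2}). Your closing paragraph also correctly flags the one technical point the paper leaves implicit, namely that the induction must carry a full asymptotic expansion (not just the leading equivalent) so that the difference $\boldsymbol{\widehat{\varepsilon}}_{2k}^{(n+1)}-\boldsymbol{\widehat{\varepsilon}}_{2k}^{(n)}$ is controlled.
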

Thus, when $n$ tends to infinity, the sequence $({\boldsymbol{\widehat{\varepsilon}}}_{2k+2}^{(n)})$ does not converge to
$\mathbf S$ faster than the sequence $({\boldsymbol{\widehat{\varepsilon}}}_{2k}^{(n)})$.
We finally have the last result whose proof uses (\ref{final22}) and the expression given by Wynn \cite{pwconv} in the scalar case
\begin{theorem}
\label{thm5}
We consider sequences of the form
$$\mathbf{S_n-S}  \sim (-1)^n \sum_{i=1}^\infty a_i  (n+b)^{-i} \mathbf{u_i},\quad (n \to \infty),$$
where $a_i, b \in \mathbb K$, $\mathbf{u_i} \in E$. Then, when $k$ is fixed and $n$ tends to infinity
$$
\begin{array}{ll}
\displaystyle
{\boldsymbol{\widehat{\varepsilon}}}_{2k}^{(n)}-\mathbf S \sim (-1)^n \frac{a_1
(k\:!)^2 \mathbf{u_1}}{4^k(n+b)^{2k+1}}, & \quad
\displaystyle \frac{\|{\boldsymbol{\widehat{\varepsilon}}}_{2k+2}^{(n)}-\mathbf S\|}
{\|{\boldsymbol{\widehat{\varepsilon}}}_{2k}^{(n)}-\mathbf S\|}={\cal O}(1/(n+b)).
\end{array}
$$
\end{theorem}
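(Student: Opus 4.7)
My plan is to proceed by induction on $k$, using the form (\ref{final22}) of the first simplified topological $\varepsilon$--algorithm, which we can rewrite as a convex--like combination
\[
{\boldsymbol{\widehat{\varepsilon}}}_{2k+2}^{(n)}-\mathbf S
=\alpha_{k}^{(n)}\bigl({\boldsymbol{\widehat{\varepsilon}}}_{2k}^{(n+1)}-\mathbf S\bigr)
+\bigl(1-\alpha_{k}^{(n)}\bigr)\bigl({\boldsymbol{\widehat{\varepsilon}}}_{2k}^{(n)}-\mathbf S\bigr),
\qquad
\alpha_{k}^{(n)}=\frac{\varepsilon^{(n)}_{2k+2}-\varepsilon^{(n)}_{2k}}{\varepsilon^{(n+1)}_{2k}-\varepsilon^{(n)}_{2k}}.
\]
The base case $k=0$ is immediate since ${\boldsymbol{\widehat{\varepsilon}}}_{0}^{(n)}-\mathbf S=\mathbf S_n-\mathbf S\sim(-1)^n a_1 \mathbf u_1/(n+b)$, which is the claimed formula at $k=0$. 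For the inductive step I would first apply Wynn's scalar result (the logarithmic analogue of the one used in Theorem~\ref{thm2}) to the scalar sequence $(S_n=\langle\mathbf y,\mathbf S_n\rangle)$, which itself has the asymptotic $S_n-S\sim(-1)^n\sum_i a_i\langle\mathbf y,\mathbf u_i\rangle(n+b)^{-i}$. This yields
\[
\varepsilon_{2k}^{(n)}-S\sim (-1)^n\,\frac{a_1\langle\mathbf y,\mathbf u_1\rangle\,(k!)^2}{4^k(n+b)^{2k+1}},
\]
together with an analogous expression for $\varepsilon_{2k+2}^{(n)}-S$ with exponent $-(2k+3)$.

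The next step is to expand $\alpha_k^{(n)}$. The denominator $\varepsilon^{(n+1)}_{2k}-\varepsilon^{(n)}_{2k}$ is, by the alternating sign, asymptotic to $-2(-1)^n C_k/(n+b)^{2k+1}$ with $C_k=a_1\langle\mathbf y,\mathbf u_1\rangle(k!)^2/4^k$, while $\varepsilon^{(n)}_{2k+2}-\varepsilon^{(n)}_{2k}\sim-(-1)^n C_k/(n+b)^{2k+1}$ since the $\varepsilon^{(n)}_{2k+2}-S$ piece is of the smaller order $(n+b)^{-(2k+3)}$. Thus $\alpha_k^{(n)}\to 1/2$, but the \emph{correction} must be tracked to the next order in $1/(n+b)$ using Taylor expansion of $(n+b+1)^{-(2k+1)}$ and of the subleading piece $C_{k+1}/(n+b)^{2k+3}$; this correction is $O(1/(n+b)^2)$ with a coefficient independent of $\langle\mathbf y,\mathbf u_1\rangle$ (since it cancels in the ratio), whose exact value is forced by Wynn's scalar theorem at level $k+1$.

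I then plug the inductive hypothesis ${\boldsymbol{\widehat{\varepsilon}}}_{2k}^{(n)}-\mathbf S\sim(-1)^n A_k\mathbf u_1/(n+b)^{2k+1}$, $A_k=a_1(k!)^2/4^k$, into the recursion. The leading contribution is $\tfrac12\bigl[(\boldsymbol{\widehat\varepsilon}_{2k}^{(n+1)}-\mathbf S)+(\boldsymbol{\widehat\varepsilon}_{2k}^{(n)}-\mathbf S)\bigr]$, where the alternating sign produces cancellation: the difference $(n+b)^{-(2k+1)}-(n+b+1)^{-(2k+1)}$ is $O(1/(n+b)^{2k+2})$. Combined with the $O(1/(n+b)^2)$ correction of $\alpha_k^{(n)}$ acting on $\boldsymbol{\widehat\varepsilon}_{2k}^{(n+1)}-\boldsymbol{\widehat\varepsilon}_{2k}^{(n)}$, this is organized, after Taylor expansion, into a single term of order $(n+b)^{-(2k+3)}$, directed along $\mathbf u_1$. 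Since the same scalar coefficients $\alpha_k^{(n)}$ govern both the scalar iterates (for which Wynn's theorem yields the known constant $((k+1)!)^2/4^{k+1}=((k+1)^2/4)(k!)^2/4^k$) and the topological iterates along the $\mathbf u_1$ direction, the constant is forced to be $A_{k+1}=(k+1)^2 A_k/4$, completing the induction. The bound on $\|{\boldsymbol{\widehat{\varepsilon}}}_{2k+2}^{(n)}-\mathbf S\|/\|{\boldsymbol{\widehat{\varepsilon}}}_{2k}^{(n)}-\mathbf S\|$ is then immediate by taking norms.

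The main obstacle is the double cancellation at the leading order: naively, both the scalar $\alpha_k^{(n)}\to 1/2$ and the averaging of consecutive $\boldsymbol{\widehat\varepsilon}_{2k}^{(\cdot)}-\mathbf S$ values produce $0$ to leading order, so the asymptotic of ${\boldsymbol{\widehat{\varepsilon}}}_{2k+2}^{(n)}-\mathbf S$ is encoded entirely in the subleading corrections of two different quantities. One could instead avoid some of this bookkeeping by mimicking the ansatz--matching strategy used in the proof of Theorem~\ref{thm2}, substituting a trial form $(-1)^n\sum_i\alpha_{k,i}\mathbf u_i\,(n+b)^{-\nu_{k,i}}$ and using the recursion to determine the exponents and coefficients; the leading pair $(\nu_{k,1},\alpha_{k,1})=(2k+1,a_1(k!)^2/4^k)$ is then pinned down by matching with Wynn's scalar result along the functional $\mathbf y$.
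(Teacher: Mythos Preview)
Your approach—induction on $k$ via rule~(\ref{final22}) together with Wynn's scalar asymptotics for alternating logarithmic sequences—is exactly what the paper's one-line proof indicates. However, your expansion of $\alpha_k^{(n)}$ contains an error that breaks the cancellation mechanism you describe. Writing $\varepsilon_{2k}^{(n)}-S\sim(-1)^nC_k(n+b)^{-(2k+1)}$ with $C_k=a_1\langle\mathbf y,\mathbf u_1\rangle(k!)^2/4^k$, one finds (the subleading coefficient of $\varepsilon_{2k}^{(n)}-S$ indeed cancels in the ratio, as you anticipate)
\[
\alpha_k^{(n)}=\tfrac12+\frac{2k+1}{4(n+b)}+O\bigl((n+b)^{-2}\bigr),
\]
so the first correction is $O(1/(n+b))$, not $O(1/(n+b)^2)$. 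This is not cosmetic: with only an $O(1/(n+b)^2)$ correction, the averaging $\tfrac12\bigl[({\boldsymbol{\widehat{\varepsilon}}}_{2k}^{(n+1)}-\mathbf S)+({\boldsymbol{\widehat{\varepsilon}}}_{2k}^{(n)}-\mathbf S)\bigr]$ would leave an uncancelled term $\tfrac12(-1)^n(2k+1)A_k\mathbf u_1(n+b)^{-(2k+2)}$, one order too large for the claimed result. It is precisely the $\tfrac{2k+1}{4(n+b)}$ piece of $\alpha_k^{(n)}-\tfrac12$, acting on ${\boldsymbol{\widehat{\varepsilon}}}_{2k}^{(n+1)}-{\boldsymbol{\widehat{\varepsilon}}}_{2k}^{(n)}\sim-2(-1)^nA_k\mathbf u_1(n+b)^{-(2k+1)}$, that supplies the counterterm at order $(n+b)^{-(2k+2)}$.

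A related gap: your inductive hypothesis carries only the leading term of ${\boldsymbol{\widehat{\varepsilon}}}_{2k}^{(n)}-\mathbf S$, but the leading term at level $k+1$, being of order $(n+b)^{-(2k+3)}$, draws on the first \emph{three} terms of the expansion at level $k$ (and on $\alpha_k^{(n)}$ to two orders). Your appeal to ``the constant is forced by Wynn's scalar theorem'' does not close this: matching under $\langle\mathbf y,\cdot\rangle$ pins down only one linear functional of the limiting vector, not the vector itself. The clean fix is the one you sketch at the end—carry a full trial expansion $(-1)^n\sum_{j}\boldsymbol\beta_{k,j}(n+b)^{-j}$ with vector coefficients through the recursion, as in the proof of Theorem~\ref{thm2}, and identify the leading coefficient $\boldsymbol\beta_{k,2k+1}$ inductively.
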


The results of \cite{gari} about sequences with more complicated asymptotic expansions could possibly be extended similarly to the first simplified topological $\varepsilon$--algorithm. Since the vector $E$--algorithm \cite{ealg} can be used for implementing the first topological Shanks transformation, the acceleration results proved for it should also be valid for the first simplified topological $\varepsilon$--algorithm \cite{matos}.

\section{Implementation and performances}
\label{impl}

For implementing all the $\varepsilon$--alg\-orith\-ms
and for constructing the $\varepsilon$--array,
the simplest procedure is to store all its elements, for all $k$ and $n$.
Starting from a given number of terms in the two initial columns (the second one for the simplified
algorithms), the other
columns are computed one by one, each of them having one term less than the preceding one (two terms less for the simplified
algorithms).
Thus, a triangular part of the $\varepsilon$--array  is obtained (only the even columns in the
simplified cases). However, such a procedure
requires to store all the elements of this triangular part, and it can be costly for sequences of  vectors
or matrices.

A better procedure, which avoids to store the whole triangular array,
is to add each new term of the original sequence one by one, and to proceed with the computation of the
$\varepsilon$--array by ascending diagonal. This technique was invented   
by Wynn for the scalar $\varepsilon$--algorithm and other algorithms having a
similar type of rules \cite{asc1,asc2} (see also \cite[pp. 397ff.]{cbmrz}), including the vector $\varepsilon$--algorithm \cite{evec}. It can be summarized as follows:
after having computed a triangular part of the $\varepsilon$--array,
and stored only its last ascending diagonal (containing all the terms needed for computing
the next diagonal), a new element of the initial sequence is introduced
and the next ascending diagonal is computed, element by element.
This technique only requires the storage of one ascending diagonal and three temporary auxiliary elements.
Let us mention that, for computing the descending diagonal $\varepsilon_0^{(0)}, \varepsilon_1^{(0)},\varepsilon_2^{(0)},
\ldots$, the new ascending diagonal has to be computed as far as possible, while, for computing the descending column
$\varepsilon_{2k}^{(0)},\varepsilon_{2k}^{(1)},\varepsilon_{2k}^{(2)},\ldots$, for a fixed value of $k$, the new ascending diagonal has to be computed until the column $2k$ has been reached.

In the topological $\varepsilon$--algorithms, the odd rule has the same form as that of the scalar
$\varepsilon$--algorithm, but the even rule needs an extra element, as showed in
Table \ref{TEA12}. This fact is not a problem
for implementing {\sc tea{\small 2}} with exactly the same technique as Wynn's, since the extra term,
namely $\boldsymbol{\widetilde{\varepsilon}}_{2k}^{(n+2)}$, needed for computing
$\boldsymbol{\widetilde{\varepsilon}}_{2k+2}^{(n)}$,  is an already computed element of the
diagonal we are constructing.
Thus, it is necessary to store only the preceding diagonal consisting of elements of $E$ and $E^*$.

Contrarily to {\sc tea{\small 2}}, for implementing {\sc tea{\small 1}}, the extra
element, namely $\boldsymbol{\widehat{\varepsilon}}_{2k}^{(n)}$,  is in the
diagonal above the preceding one. Thus, in addition to the full preceding diagonal, we need to
have also stored the even elements of the previous one.
Thus, in total, one and a half diagonal (of elements of $E$ and $E^*$) has to be stored.

For the simplified topological
$\varepsilon$--algorithms, only elements with an even lower index are used and computed, and
the rules are showed in Table \ref{STEA12}.
Similar considerations as above can be made for the new algorithms. Thus,
for {\sc stea{\small 1}}, by the ascending diagonal technique,
one has only to store the elements of $E$ , that is those with an even lower index,
located in the two preceding diagonals, and
 for {\sc stea{\small 2}}, only the elements of $E$ located in the preceding diagonal.

Of course,  we also have to compute,  by the ascending diagonal technique of Wynn, the elements
of the scalar $\varepsilon$--array, but this is cheep in term of storage requirements and arithmetical operations.
Each new scalar term needed is computed by taking the duality product of the new element of the sequence
with $\mathbf y$.

The storage requirements of the four topological algorithms for computing $\boldsymbol{\widehat{\varepsilon}}_{2k}^{(0)}$ or $\boldsymbol{\widetilde{\varepsilon}}_{2k}^{(0)}$, including the
temporary auxiliary elements, are given in Table \ref{tabb1}.

\begin{table}[!h]
\begin{center}
\begin{tabular}{|c||c|c||c|}
\hline
{\small algorithm} &  {\small  \# elements ${\boldsymbol \varepsilon}$-array} & {\small  in spaces} &  {\small  \# auxiliary elements}\\
\hline \hline
{\small \sc    tea1} & $ 3k$ & $ E, E^*$ & $ 3$\\
{\small \sc   stea1} & $ 2k$ & $ E$ & $ 2$\\
\hline \hline
{\small \sc    tea2} & $ 2k$ &  $ E, E^*$ & $ 3$\\
{\small \sc   stea2} & $ k$ & $ E$ & $ 2$\\
\hline
\end{tabular}
\caption{Storage requirements.}
\label{tabb1}
\end{center}
\end{table}

In the algorithms {\sc stea{\small 1}} and  {\sc stea{\small 2}}, the linear functional
$\mathbf y$ has only to be applied to each new element ${\mathbf S}_n$, and this duality product is immediately
used in the scalar
$\varepsilon$--algorithm for building the new ascending diagonal of the scalar $\varepsilon$--array and, then,
the new ascending diagonal of the topological $\varepsilon$--array consisting only of elements of $E$.

Let us now discuss some possible choices of the  linear functional $\mathbf y \in E^*$.
When $E=\mathbb C^m$, we can define it as $\mathbf y: \mathbf S_n \in \mathbb C^m \longmapsto
<\mathbf y,\mathbf S_n>=(\mathbf y,\mathbf S_n)$, the usual inner product of the
vectors $\mathbf y$ and $\mathbf S_n$, or, more generally,
as $(\mathbf y,\mathbf M\mathbf S_n)$, where $\mathbf M$ is some matrix.

When $E=\mathbb C^{m \times m}$, the linear functional $\mathbf y \in E^*$ can be defined as $\mathbf y: \mathbf S_n \in \mathbb C^{m \times m} \longmapsto
<\mathbf y,\mathbf S_n>=\mbox{trace}(\mathbf S_n)$ or, more generally, for $ \mathbf S_n \in \mathbb C^{m \times s}$, as $\mbox{trace}(\mathbf Y^T \mathbf S_n)$
where $\mathbf Y \in \mathbb C^{m \times s}$.
We can also define $\mathbf y$ by $(\mathbf u,\mathbf S_n \mathbf v)$, where $\mathbf u \in \mathbb C^m$ and
$\mathbf v \in \mathbb C^s$.

The simplified topological $\varepsilon$--algorithms for implementing the topological Shanks transformations also allow us to improve its numerical stability.
This is due to the existence, for the scalar $\varepsilon$--algorithm,
of particular rules derived by Wynn \cite{sr} for this purpose,
while the topological $\varepsilon$--algorithms have no particular rules (such rules also exist for other acceleration algorithms \cite{tmrz}).
These particular rules are used as soon as, for some fixed $p$,
$$|\varepsilon_k^{(n+1)}-\varepsilon_k^{(n)}|/|\varepsilon_k^{(n)}| < 10^{-p}.$$
When this condition is satisfied, a so--called {\it singularity} occurs, the particular rule is used instead of the normal one
for computing a term of the table that will be affected by numerical instability,
and a counter $\sigma$ indicating the number of singularities detected is increased by 1.

\begin{table}[here]
\begin{center}
\begin{tabular}{|c||c|c||c|c|}
\multicolumn{1}{c}{~}& \multicolumn{2}{c}{$p=12$}&\multicolumn{2}{c}{$p=13$}\\
\hline
algorithm &  $ \sigma$ & ${  \|{\boldsymbol\varepsilon}_{10}^{(0)}\|_{\infty}}$ & $ \sigma$ &
${ \|{\boldsymbol\varepsilon}_{10}^{(0)}\|_{\infty}}$ \\
\hline \hline
{\small \sc  tea1\phantom{s--x}} &   &    $9.92 \times 10^{-1}$ &&  $9.92\times 10^{-1}$\\
\hline
{\small \sc  stea1--x} &  $2$ &  $9.35 \times 10^{-4}$ & $0$& $1.15$ \\
\hline \hline
{\small \sc   tea2\phantom{s--x}} &  &  $6.01$ &&   $6.01$\\
\hline
{\small \sc  stea2--1} &  $2$ &  $9.42 \times 10^{-13}$  & $0$& $3.10$\\
{\small \sc  stea2--2} &   $2$&  $1.67 \times 10^{-12}$  & $0$& $3.10$\\
{\small \sc  stea2--3} &  $2$ &  $9.46 \times 10^{-13} $ & $0$& $3.10$\\
{\small \sc  stea2--4} &   $2$&  $1.66 \times 10^{-12}$  & $0$& $3.10$\\
\hline
\end{tabular}
\caption{Performances of the various algorithms for sequence of vectors.}
\label{tabv}
\end{center}
\end{table}

Let us give two numerical examples which show the gain
in numerical stability and accuracy  brought by the simplified topological
$\varepsilon$--algorithms.
We denote by {\small \sc stea1--1}, {\small \sc    stea1--2}, {\small \sc    stea1--3}
and {\small \sc    stea1--4}, respectively, the four rules (\ref{final})--(\ref{final3}) of the
first simplified topological $\varepsilon$--algorithm, and a similar notation for
the equivalent forms  of the second simplified topological $\varepsilon$--algorithm
given in Section \ref{STEAsection}.

We consider the sequence of vectors defined by
$$\begin{array}{@{}l@{~}llll@{}}
\mathbf S_0= \mathbf r, &
\mathbf S_1= (1,\ldots,1)^T, &
\mathbf S_2= (1,\ldots,1)^T+ 10^{-11} \mathbf r, &
\mathbf S_3= \mathbf S_0, &
\mathbf S_4 = \mathbf S_0+ 10^{-11} \mathbf r\\
\multicolumn{5}{@{}l@{}}{\mathbf S_n = 3\mathbf S_{n-1} -\mathbf S_{n-2}+2\mathbf
S_{n-3}+\mathbf S_{n-4}-5\mathbf S_{n-5}, \quad n=5,6,\ldots,}
\end{array}
$$
where $\mathbf r$ is a fixed random vector  whose components are uniformly distributed in $[0,1]$.
Since the vectors $\mathbf S_n$ satisfy a linear difference equation of order 5, we must have
$\boldsymbol{\widehat{\varepsilon}}_{10}^{(n)}= \boldsymbol{\widetilde{\varepsilon}}_{10}^{(n)} =\mathbf 0$
for all $n$. With vectors of dimension 10000 and $\mathbf y=(1,\ldots,1)^T$,
we obtain the results of Table \ref{tabv} ({\small \sc  stea1--x} denotes any of the forms of the algorithm). The notation ${\boldsymbol\varepsilon}_{10}^{(0)}$ corresponds to
${\boldsymbol{\widehat{\varepsilon}}_{10}^{(0)}}$ for {\small\sc tea1} and {\small\sc stea1--x}, and to
${\boldsymbol{\widetilde{\varepsilon}}_{10}^{(0)}}$ for {\small\sc tea2} and {\small\sc stea2--x}.
We can see the important improvement obtained by {\small \sc  stea2} when
$\sigma=2$ singularities have been detected and treated.

\begin{table}[hbt]
\begin{center}
\begin{tabular}{|c||c|c||c|c||c|c|}
\multicolumn{1}{c}{~}& \multicolumn{2}{c}{$p=7$}&\multicolumn{2}{c}{$p=8$}&\multicolumn{2}{c}{$p=10$}\\
\hline
algorithm &  $ \sigma$ & $ \|{\boldsymbol\varepsilon}_{10}^{(0)}\|_{\infty}$ & $ \sigma$ & $ \|
{\boldsymbol\varepsilon}_{10}^{(0)}\|_{\infty}$ &  $ \sigma$ & $ \|{\boldsymbol\varepsilon}_{10}^{(0)}\|_{\infty}$ \\
\hline \hline
{\small \sc   stea1{--x}} &  2 &  $6.14 \times 10^{-7}$ & 1& $1.04$& 0& $1.04$\\
\hline
\hline
{\small \sc    stea2--1} &  2 &  $1.31 \times 10^{-12}$  & 1& $3.02$& 0& $3.02$\\
{\small \sc    stea2--2} &  2 &  $1.05 \times 10^{-12}$  & 1& $3.02$& 0& $3.02$\\
{\small \sc    stea2--3} &   2&  $1.30 \times 10^{-12}$  & 1& $3.02$& 0& $3.02$\\
{\small \sc    stea2--4} &  2 &  $1.04 \times 10^{-12}$  & 1& $3.02$& 0& $3.02$\\
\hline
\end{tabular}
\caption{Performances of the various algorithms for sequence of matrices.}
\label{tabm}
\end{center}
\end{table}
\indent Let us now consider the case where the $\mathbf S_n$ are $m \times m$ matrices constructed
exactly by the same recurrence relation as the vectors of the previous example and with the
same initializations ($\mathbf r$ is now a fixed random matrix). The linear functional
$\mathbf y$ is defined as
$<\mathbf y, \mathbf S_n>=\mbox{trace}(\mathbf S_n)$. For $m=2000$, we obtain the results of
Table \ref{tabm}. We remark that the treatment of only one singularity is not enough for avoiding
the numerical instability, but that, when $\sigma=2$, both
{\small \sc  stea1} and {\small \sc  stea2} give pretty good results, which underlines
the importance
of detecting the singularities and using the particular rules.

To end of this section, we want to add two remarks. In all our tests,
the results obtained with  {\sc stea{\small 2}} are often better. This could be due to the
fact that {\sc stea{\small 2}}
computes a combination (with the same coefficients) of
$\mathbf S_{n+k},\ldots,\mathbf S_{n+2k}$ instead of  $\mathbf S_{n},\ldots,\mathbf S_{n+k}$ for
{\sc stea{\small 1}}. Also, from the tests performed, we cannot
decide which of the four equivalent forms of each simplified algorithm seems to be the most stable one.
However, these experiments suggest us to avoid  using terms of the scalar $\varepsilon$--algorithm
with an odd lower index, and thus, in the next section,
where some applications of the topological Shanks transformations are presented,
we will only show the results obtained by
{\small \sc    stea1--3} and {\small \sc    stea2--3}.

\section{Applications}
\label{appli}

We will consider two types on applications, one with sequences of vectors and another one with sequences of matrices.

\subsection{System of equations}
\label{linit}

There exist many iterative methods for solving systems of nonlinear and linear equations.
For nonlinear systems, the $\varepsilon$--algorithms (scalar, vector, topological) lead to methods with a quadratic convergence under some assumptions.
For linear systems, these algorithms are, in fact, direct methods but they cannot be used in practice since the computation of the exact solution requires too much storage. However, they can be used for accelerating the convergence of iterative methods. Let us look at such an example.

Kaczmarz method \cite{kacz} is an iterative method for linear equations.
It is, in particular, used in  tomographic imaging where it is known as the {\it Algebraic Reconstruction Technique}
({\sc art}). It is well suited for parallel computations and large--scale problems because each step requires only
one row of the matrix (or several rows simultaneously in its block version) and no matrix-–vector products are needed.
It is always converging but its convergence is often quite slow. Procedures for its acceleration were studied in \cite{akrk}.
For the {\tt parter} matrix of dimension 5000 of the {\sc matlab}$^\copyright$ matrix toolbox with $\mathbf x=(1,\ldots,1)^T$,
$\mathbf b=\mathbf A\mathbf x$ computed accordingly, and $\mathbf y=\mathbf b$, Kaczmarz method achieves an error of
$3.44 \times 10^{-1}$ after 48 iterations. With $k=1,3$ or $5$, all our algorithms produce an error between $10^{-12}$ and $10^{-13}$ after 41, 24, and 19 iterations respectively.

Kaczmarz method belongs to the class of {\it Alternating Projection Methods} for finding a point in the intersection of several subsets of an Hilbert space. They are studied in \cite{escal} where some  procedures for their acceleration are given. The algorithms proposed in this paper could be helpful in this context.

\subsection{Matrix equations}

The solution of matrix equations and the computation of matrix functions have important applications \cite{from}. For example, the construction of iterative preconditioners, Newton's method for computing the inverse of a matrix which often converges slowly at the beginning before becoming quadratic, iterative methods for its square root, or for solving the algebraic Riccati equation, or the Lyapunov equation, or the Sylvester equation, or computing the matrix sign function, or decomposition methods for the computation of eigenvalues (method of Jacobi, algorithms LR and QR, etc.).
Let us now consider examples where a sequence of square matrices $\mathbf S_n \in \mathbb R^{m \times m}$ has to be accelerated.

\vskip 2mm

{\bf Example 1:} A new inversion--free iterative method for obtaining the minimal Hermitian positive definite solution
of the matrix rational equation $F(\mathbf S)=\mathbf S + \mathbf A^*\mathbf S^{-1}\mathbf A-\mathbf I=\mathbf 0$, where $\mathbf I$ is the identity matrix and $\mathbf A$ is a given nonsingular matrix was proposed in \cite{marcos}.
It consisted in the following iterations, denoted {\sc ns},
$$\mathbf S_{n+1}=2\mathbf S_n-\mathbf S_n \mathbf A^{-*}(\mathbf I-\mathbf S_n)\mathbf A^{-1}\mathbf S_n,\quad n=0,1,\ldots,$$
with $\mathbf S_0=\mathbf A \mathbf A^*$. All iterates $\mathbf S_n$ are Hermitian. The inverse of $\mathbf A$ has to be computed once at the beginning of the iterations, each of them needing three matrix--matrix products since $\mathbf S_n \mathbf A^{-*}$ is the conjugate transpose of $\mathbf A^{-1}\mathbf S_n$. However, in practice, rounding errors destroys the Hermitian character of $\mathbf S_n$ when $n$ grows. Thus, it is better to program the method as written above without using the conjugate transpose of $\mathbf A^{-1}\mathbf S_n$ and, thus, each iteration requires four matrix--matrix products.

We tried the 5 examples given in \cite{marcos} with $\mathbf y$ corresponding to the trace.
For example 1, the gain is only 1 or 2 iterations.
For example 2, the same precision is obtained by {\sc stea{\small 1}} and {\sc stea{\small 2}}
with $k=1$ in 19--20 iterations instead of 27 for {\sc ns}, and in 16 for $k=2$ and $k=3$.
For the examples 3 and 5, no acceleration is obtained since the {\sc ns}
iterations converge pretty well.   For example 4, we have the results of Table \ref{tabb3}
obtained by stopping the iterations when the Frobenius norm of $F(\mathbf S_n)$ becomes smaller
than $2 \times 10^{-15}$ for one of the methods.

\begin{table}[!h]
\begin{center}
\begin{tabular}{|c||c@{~}c|c@{~}l|c@{~}l|}
\multicolumn{1}{c}{~}
& \multicolumn{2}{c}{$k=1$} & \multicolumn{2}{c}{$k=2$} & \multicolumn{2}{c}{$k=3$}\\
\hline
algorithm & \# iter. & Frob. norm & \# iter. & Frob. norm & \# iter. & Frob. norm\\
\hline
\hline
{\sc ns} & 118 & $1.98\times 10^{-15}$&60 &$6.80\times 10^{-9}$ &49 &$1.18\times 10^{-7}$\\
{\sc stea{\small 1}} & 60 & $1.47\times 10^{-15}$ &60 & $1.52\times 10^{-15}$&43 &$1.63\times 10^{-15}$\\
{\sc stea{\small 2}} & 61 & $1.84\times 10^{-15}$&61 & $1.93\times 10^{-15}$&49 &$1.61\times 10^{-15}$\\
\hline
\end{tabular}
\caption{Performances of the various algorithms for the equation $F(\mathbf S)=\mathbf 0$.}
\label{tabb3}
\end{center}
\end{table}

\vskip 2mm

{\bf Example 2:}
More generally, consider now the matrix equation $\mathbf S+\mathbf A^*\mathbf S^{-q}\mathbf A=\mathbf Q$ for  $0<q \leq 1$.
It can be solved by the iterative method (2.6) of \cite{yin}
$$
\begin{array}{lcl}
\mathbf S_n&=&\mathbf Q-\mathbf A^* \mathbf S_n^q \mathbf A\\
\mathbf Y_{n+1}&=&2\mathbf Y_n-\mathbf Y_n \mathbf S_n \mathbf Y_n,
\end{array}
$$
with $\mathbf Y_0= (\gamma \mathbf Q)^{-1}$ and $\gamma$ conveniently chosen.
For the example 4.2 of dimension 5 with $q=0.7$ and $\gamma=0.9985$ treated in \cite{yin},
the Euclidean norm of the error is $1.30\times 10^{-10}$ at iteration 13 of the iterative method,
 while, with $k=3$, it is $5.8 \times 10^{-11}$ for {\sc stea{\small 1}} and
$6.5\times 10^{-14}$ for {\sc stea{\small 2}}.
At iteration 17, the norm of the error of the iterative method is
$4.11\times 10^{-14}$, that of {\sc stea{\small 1}}  is
$3.5\times 10^{-12}$, and {\sc stea{\small 2}} gives $1.49\times 10^{-14}$.

\vskip 2mm

{\bf Example 3:} We consider now the symmetric Stein matrix equation, also called the
discrete--time Lyapunov equation, $\mathbf S-\mathbf A\mathbf S\mathbf A^T=\mathbf F\mathbf F^T$, $\mathbf F \in \mathbb R^{m \times s}$, $s << m$, and where the eigenvalues of $\mathbf A$ are inside the unit disk.
As explained in \cite{jsnuma}, this equation can be solved by the iterative method given in \cite{smith},
$\mathbf S_{n+1}=\mathbf F\mathbf F^T+\mathbf A\mathbf S_n\mathbf A^T$, $n=0,1,\ldots,$ with $\mathbf S_0=\mathbf 0$.
This method converges slowly.
We performed several tests with matrices from the {\sc matlab}$^\copyright$ matrix toolbox. The gain was always around two digits.

\section{Conclusions}

For concluding, let us summarize the characteristics of the old and new algorithms for implementing the topological Shanks transformations, and compare them.

\vspace{0.3cm}
\begin{center}
\noindent
{\small
\begin{tabular}{|@{~}l@{~}|@{~}l@{~}|}
\multicolumn{1}{@{~}c}{\sc Topological $\varepsilon$--alg. ({\sc tea{\small 1}}, {\sc tea{\small 2}})}
& \multicolumn{1}{@{~}c@{~}}{ \sc Simplified   $\varepsilon$--alg. ({\sc stea{\small 1}},
{\sc stea{\small 2}})} \\
 \hline
 --~2 rules; & --~only 1 rule; \\
\parbox[t]{6.2cm}{--~storage of one and a half ascending diagonals for  {\sc tea{\small 1}}, and
one  ascending diagonal for {\sc tea{\small 2}}; } &
\parbox[t]{6.2cm}{
--~storage of two half ascending diagonals for
{\sc stea{\small 1}}, and half of an ascending diagonal for {\sc stea{\small 2};}
}\\
 \parbox[t]{6.2cm}{--~storage of elements of $E$ and  $E^*$;} &
\parbox[t]{6.2cm}{--~storage of only elements of $E$;} \\
\parbox[t]{6.2cm}{--~use of the duality product by elements of $E^*$
and  by $\bf y$ into the rule of the algorithms;} &
\parbox[t]{6.2cm}{--~application of  $\bf y$ only to ${\bf S}_n \in E$, and
no use of the duality product into the rules; }\\
\parbox[t]{6.2cm}{--~convergence and acceleration results difficult to obtain (rules too complicated);}
&\parbox[t]{6.2cm}{--~possibility to prove convergence and acceleration results;}\\
\parbox[t]{6.2cm}{--~numerical instability can be present.} &
\parbox[t]{6.2cm}{--~possibility of improving the numerical stability.} \\
\hline
\end{tabular}
}
\end{center}
\vskip 3mm




\end{document}